\documentclass[12pt,reqno]{amsart}
\usepackage{amsmath, amssymb, amsbsy, amsfonts, amsthm, latexsym, amsopn, amstext, amsxtra, euscript, amscd, color, mathrsfs}
\usepackage{amsmath,amscd}
\usepackage[normalem]{ulem}
\usepackage{soul}

\usepackage[
  a4paper,
  left=4.5cm,
  right=1.4cm,
  top=2cm,
  bottom=2cm,
  twoside=false
]{geometry}






\makeatletter
\@namedef{subjclassname@2020}{%
  \textup{2020} Mathematics Subject Classification}
\makeatother
\PassOptionsToPackage{hyphens}{url}
\usepackage[hidelinks]{hyperref}

\usepackage{cite}
\usepackage{tabularx}
\usepackage[colorinlistoftodos,prependcaption,textsize=tiny]{todonotes}

\usepackage{soul}
\usepackage{tikz}

\usepackage{amscd}
\usepackage{color,enumerate}
\usepackage{array, multirow, multicol}
\usepackage{longtable}
\usepackage{bm}  
\usepackage[hidelinks]{hyperref}
\hoffset -1.5cm

\newcommand{\RNum}[1]{\lowercase\expandafter{\romannumeral #1\relax}}

\setlength{\marginparwidth}{2.2cm}

\newtheorem{thm}{Theorem}[section]
\newtheorem{lem}[thm]{Lemma}

\newtheorem{prop}[thm]{Proposition}

\newtheorem{rmk}[thm]{Remark}

\newtheorem{thm-con}[thm]{Theorem-Conjecture}
\numberwithin{equation}{section}

\theoremstyle{definition}
\newtheorem{defn}[thm]{Definition}

\DeclareMathOperator{\lcm}{lcm}

\newcommand{\F}{\mathbb F}

\def\Tr{{\rm Tr}}

\begin{document}
\title[Some new results on permutation trinomials]{Some new results on permutation trinomials  over finite fields with even characteristic}

\author[K. Garg]{Kirpa Garg}
\address{Department of Computer Science, University of Rouen Normandy, Rouen 76000, France}
\email{kirpa.garg@gmail.com}
\author[S. U. Hasan]{Sartaj Ul Hasan}
\address{Department of Mathematics, Indian Institute of Technology Jammu, Jammu 181221, India}
\email{sartaj.hasan@iitjammu.ac.in}
\author[C. K. Vishwakarma]{Chandan Kumar Vishwakarma}
\address{Department of Mathematics, Indian Institute of Information Technology Bhopal, Bhopal 462003, India}
\email{7091chandankvis@gmail.com}

\thanks{The work of S. U. Hasan and C. K. Vishwakarma is partially supported by the Science and Engineering Research Board (SERB), Government of India, under grant CRG/2022/005418.}

\begin{abstract}
The construction of permutation trinomials of the form
$X^r(X^{\alpha (2^m-1)}+X^{\beta(2^m-1)} + 1)$ over $\F_{2^{2m}}$, where $m,~r\text{ and }\alpha > \beta$ are positive integers, is an active area of research.  Several classes of permutation trinomials with fixed values of $\alpha$, $\beta$ and $r$ have been studied. Here, we construct three new classes of permutation trinomials with $(\alpha,\beta,r)\in\{(7,5,7),(8,6,9),(10,4,11)\}$ over $\F_{2^{2m}}$. We also analyze the quasi-multiplicative equivalence of the newly obtained classes of permutation trinomials to both the existing ones and to each other. Additionally, we prove the nonexistence of a class of permutation trinomials over $\F_{2^{2m}}$ of the same type for $r=9$, $\alpha=7$, and $\beta=3$ when $m > 3$. Furthermore, we provide a proof for a conjecture on the quasi-multiplicative equivalence of two classes of permutation trinomials, as proposed by Yadav, Gupta, Singh, and Yadav (2024). 
\end{abstract}
\keywords{Finite fields, permutation trinomials, quasi-multiplicative equivalence}
\subjclass[2020]{12E20, 11T06, 11T55}
\maketitle

\section{Introduction}
Let $\mathbb{F}_{q}$ be the finite field with $q$ elements. Throughout this paper, we assume $q=2^m$, where $m$ is a positive integer, unless stated otherwise. We denote by $\F_q^*$ the multiplicative cyclic group of non-zero elements of the finite field $\F_q$ and by $\F_q[X]$ the ring of polynomials in one variable $X$ with coefficients in $\F_q$.  A polynomial $f(X)$ in $\mathbb{F}_{q}[X]$ is called a permutation polynomial of $\mathbb{F}_{q}$ if its associated mapping $X \mapsto f(X)$  is a permutation of $\mathbb{F}_{q}$. Permutation polynomials over finite fields have been a significant area of research for many years due to their broad applications in various fields of engineering and mathematics, such as cryptography, coding theory, and combinatorial design theory. For a survey of recent developments and contributions in this area, the reader is referred to~\cite{hou2015permutation,li2019survey,wang2024survey} and the references therein.

There has been significant interest in permutation polynomials, particularly those with nice algebraic forms and additional desirable properties. While a random permutation polynomial can be readily constructed using Lagrange interpolation, such permutation polynomials rarely exhibit a concise or structured form. Among permutation polynomials, monomials are the simplest and can be easily classified. In contrast, the classification of permutation binomials and trinomials presents a greater challenge. For explicit constructions of permutation binomials and trinomials, the reader is referred to \cite{gupta2022new,RG, hou2015survey, ZLF, wu2017permutation, KLQ2018, hou2015permutation, Li, NT2017, NT2019, ozbudak2023complete, LHJ2020}.

It is well known that permutation binomials with identity coefficients do not exist over finite fields of even characteristic. Motivated by this, several researchers have investigated classes of permutation trinomials of the form $X^{r}(X^{\alpha(q-1)}+X^{\beta(q-1)} + 1 )$ over finite fields of even characteristic; see, for example,~\cite{RG, Harsh, Li, ZLF} and the references therein. In 2016, Gupta and Sharma~\cite{RG} introduced four new classes of such permutation trinomials with $(\alpha, \beta, r)\in\{(3,2,2),(3,1,4),(4,3,3),(4,1,5)\}$. Moroever, additional classes of permutation trinomials of the same form were proposed in~\cite{Li, ZLF} for $(\alpha,\beta,r)\in\{(3,1,3),(3,2,3),(3,1,4),(4,3,5),(5,1,4),(5,2,5),(5,4,5),(6,2,5)\}$. Subsequently, Yadav, Gupta, Singh, and Yadav~\cite{Harsh} identified three new classes of permutation trinomials of this type with $(\alpha,\beta, r)\in\{(6,5,5),(6,1,7),(6,4,7)\}$. In this paper, we extend this investigation by  exploring additional classes of permutation trinomials of this form  with $(\alpha,\beta,r)\in\{(7,5,7),(8,6,9),(10,4,11)\}$.

In this work, the problem of analyzing the permutation behavior of the proposed trinomial is first converted into studying a polynomial in two variables over the unit circle. Then, an algorithmic approach is applied to factorize this polynomial, as its factors play a key role in determining whether the proposed trinomials exhibit permutation behavior. On the other hand, to establish the nonexistence of a certain class of permutation trinomials over $\F_{q^{2}}$, we leverage the connection between permutation polynomials and algebraic curves over finite fields. Specifically, it turns out that the Hasse-Weil bound serves as a powerful tool for proving the nonexistence of a permutation polynomial over $\F_{q}$ when $q$ is sufficiently large. However, a significant technical challenge in applying the Hasse-Weil bound lies in proving the absolute irreducibility of the algebraic curve involved.  Several researchers~\cite{BB, BM, Hou, HC} have demonstrated the nonexistence of permutation polynomials by showing that the corresponding curve is absolutely irreducible over $\F_{q}$. Here, we demonstrate the absolute irreducibility of a specific curve over $\F_q$ and, using the Hasse-Weil bound, identify a class of trinomials that does not permute $\F_{q^2}$.

A permutation polynomial $f(X)$ over $\mathbb{F}_{q}$ remains a permutation polynomial when transformed into $af(bX + c) + d$ for $a,b\in\F_{q}^{*}$ and $c,d\in\F_{q}$. While this transformation preserves the degree of the polynomial, it may alter the number of monomials. To overcome this limitation, Wu, Yuan, Ding, and Ma~\cite{wu2017permutation} introduced the concept of Quasi-Multiplicative (QM) equivalence, which ensures that the number of monomials remains unchanged. Identifying QM equivalence between two classes of permutation polynomials is a challenging problem. In this paper, we verify that the proposed trinomials are QM inequivalent both among themselves and to those previously known in the literature.  In 2024, Yadav, Gupta, Singh, and Yadav~\cite{Harsh} proposed a conjecture regarding the QM equivalence of two classes of permutation trinomials over quadratic extensions of finite fields with arbitrary characteristic. We provide a different yet elementary proof of their conjecture, which appears to have been resolved by Yadav, Singh, and Gupta~\cite{Yadav}.

We shall now give the structure of the paper. We first recall some results in Section ~\ref{S2}. In Section ~\ref{S3}, we present three new classes of permutation trinomials over the finite field $\F_{q^{2}}$. Section~\ref{S4} deals with the QM equivalence of the permutation trinomials introduced in this paper to known ones, confirming that the classes we obtain are indeed new. Additionally, in Section~\ref{S5}, we show that a family of trinomials $X^{9}(X^{7(q-1)}+X^{3(q-1)}+1)$ does not permute $\mathbb{F}_{2^{2m}}$ for $m>3$. Furthermore, in Section~\ref{S6}, we give a proof of the conjecture on the QM equivalence of trinomials stated in~\cite{Harsh}. Finally, we conclude the paper in Section~\ref{S7}.

\section{Preliminaries}\label{S2}
In this section, we recall some results to be used later in subsequent sections. Throughout the paper, we denote by 
$\mbox{Tr}_m^n$ the (relative) trace function from $\F_{p^n} \rightarrow \F_{p^m}$, i.e., $ \mbox{Tr}_m^
n(X)=\sum^{\frac{n-m}{m}}_{i=0}X^{p^{mi}},$ where $m$ and $n$ are positive integers with $m \mid n$, and $p$ is a prime number.

Note that the results stated below in this section are true for an arbitrary prime power $q = p^n$, where $n$ is an integer and $p$ is a prime number. We now present a well-known criterion studied in various forms by several authors; for instance, Wan-Lidl~\cite{Wan}, Park-Lee~\cite{Park}, Akbary-Wang~\cite{AW2007}, Wang~\cite{Wang}, and Zieve~\cite{Zieve}. This method is particularly helpful for determining whether a polynomial of the form $X^r h(X^{s})$ permutes $\F_{q}$. 
The criterion is stated in the following lemma.
\begin{lem}
Let $h(X)\in \F_{q}[X]$ and $r, s$ are positive integers such that $s\mid (q-1)$. Then $f(X)=X^r h(X^{s})$ permutes $\F_{q}$ if and only if the following two conditions
\begin{enumerate}
\item $\gcd(r, s) = 1$,
\item $ X^r ({ h(X) })^{s}$ permutes $\mu_{\frac{q-1}{s}}$,
\end{enumerate}
 are satisfied, where $\mu_{\frac{q-1}{s}}=\{a\in \F_{q}^{*} \mid a^{\frac{q-1}{s}}=1\}$ is the unit circle of order $\frac{q-1}{s}$ in $\F_{q}$.
\end{lem}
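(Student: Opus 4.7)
The plan is to reduce the permutation property of $f(X) = X^r h(X^s)$ on $\F_q$ to the interaction between $f$ and the group $\mu_{d} := \mu_{(q-1)/s}$ via the map $\sigma : X \mapsto X^s$. Since $r \geq 1$ we have $f(0) = 0$, so $f$ permutes $\F_q$ if and only if it permutes $\F_q^*$. Because $s \mid q-1$, the map $\sigma$ is a surjective group homomorphism from $\F_q^*$ onto $\mu_d$ with kernel $\mu_s$, and one checks the intertwining identity
\[
f(X)^{s} \;=\; (X^{s})^{r}\,h(X^{s})^{s} \;=\; g(X^{s}), \qquad \text{where } g(Y) := Y^{r}h(Y)^{s}.
\]
This Akbary--Ghioca--Wang-style relation is the backbone of both directions of the equivalence.

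For the $(\Leftarrow)$ direction, assume (1) and (2). Note first that $g$ permuting $\mu_d$ forces $h(Y) \neq 0$ on $\mu_d$, because $g(Y)\in\mu_d$ in particular cannot vanish. Now suppose $f(\alpha)=f(\beta)$ for $\alpha,\beta\in\F_q^*$. Raising to the $s$-th power and applying the identity above gives $g(\alpha^s)=g(\beta^s)$, so by (2) we conclude $\alpha^s=\beta^s$. Substituting back into $f(\alpha)=f(\beta)$ and cancelling the common nonzero factor $h(\alpha^s)=h(\beta^s)$ yields $\alpha^r=\beta^r$. A B\'ezout relation $ur + vs = 1$ from (1) then gives $\alpha = (\alpha^r)^u(\alpha^s)^v = (\beta^r)^u(\beta^s)^v = \beta$, establishing injectivity.

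For the $(\Rightarrow)$ direction, assume $f$ permutes $\F_q$. If $t := \gcd(r,s) > 1$, pick $\zeta\in\F_q^*$ of order $t$ (available because $t \mid s \mid q-1$); then $\zeta^r = \zeta^s = 1$, so $f(\zeta X) \equiv f(X)$ identically, contradicting injectivity. Hence (1) holds. For (2), take $y_1,y_2\in\mu_d$ with $g(y_1)=g(y_2)$ and lift via $\sigma$ to $x_i\in\F_q^*$ with $x_i^s=y_i$. The intertwining identity yields $f(x_1)^s=f(x_2)^s$, so $f(x_1)=\xi f(x_2)$ for some $\xi\in\mu_s$. Using $\gcd(r,s)=1$, the map $\eta\mapsto \eta^r$ permutes $\mu_s$, so we may choose $\eta\in\mu_s$ with $\eta^r=\xi$. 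Then
\[
f(\eta x_2) \;=\; \eta^{r} x_{2}^{r}\, h(\eta^{s}x_{2}^{s}) \;=\; \xi\, f(x_2) \;=\; f(x_1),
\]
and injectivity of $f$ gives $\eta x_2 = x_1$, whence $y_1 = x_1^s = \eta^s x_2^s = y_2$.

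The single subtle point to watch is the production of the auxiliary $\eta\in\mu_s$ in the forward direction: this is where the coprimality hypothesis $\gcd(r,s)=1$ is genuinely exploited, via the fact that raising to the $r$-th power is a bijection on $\mu_s$ precisely under this condition. Everything else is routine bookkeeping around the identity $f(X)^s = g(X^s)$.
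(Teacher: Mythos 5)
Your proof is correct; the paper states this lemma without proof, citing it as a well-known criterion (Wan--Lidl, Park--Lee, Akbary--Wang, Wang, Zieve), and your argument is essentially the standard one from those references: the intertwining identity $f(X)^s = g(X^s)$ with $g(Y)=Y^r h(Y)^s$, together with the lifting argument through the kernel $\mu_s$ of $X\mapsto X^s$. The only point worth stating explicitly in the forward direction is that $g$ maps $\mu_{\frac{q-1}{s}}$ into itself (immediate from $g(x^s)=f(x)^s$ with $f(x)\in\F_q^*$), so that the injectivity you establish really does give a permutation of $\mu_{\frac{q-1}{s}}$.
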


The special case of the above lemma for the finite field $\F_{q^2}$ and $s=q-1$ gives the following result, which we use to determine whether the new families of trinomials considered in this paper permute $\F_{q^2}$.

\begin{lem}\label{PP}
For $h(X) \in \F_{q^2} [X]$, $f(X) = X^r h(X^{q-1})$ permutes $\F_{q^2}$ if and only if the following two conditions hold
\begin{enumerate}
\item $\gcd(r, q - 1) = 1$,
\item $g(X) = X^r(h(X))^{q-1}$ permutes $\mu_{q+1}$.
\end{enumerate}
\end{lem}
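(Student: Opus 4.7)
The plan is to obtain Lemma~\ref{PP} as a direct specialization of the previous lemma (the Wan--Lidl / Park--Lee / Akbary--Wang / Wang / Zieve criterion). No new ideas are needed; the work is entirely bookkeeping, and the ``main obstacle,'' such as it is, amounts to making sure the arithmetic of the substitution lines up.

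First I would replace the ambient field $\F_q$ in the previous lemma by $\F_{q^2}$, so that the role of $q-1$ there is played by $q^2-1$. Then I would set the parameter $s$ equal to $q-1$. The divisibility hypothesis $s \mid (q^2-1)$ required by the previous lemma is immediate from the factorization
\[
q^2-1=(q-1)(q+1),
\]
so the lemma applies. The unit circle produced by the previous lemma is $\mu_{(q^2-1)/s}$; with $s=q-1$ this simplifies to $\mu_{(q^2-1)/(q-1)}=\mu_{q+1}$, matching the set appearing in the statement.

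Next I would translate the two conclusions of the previous lemma directly. Condition (1) there, $\gcd(r,s)=1$, becomes $\gcd(r,q-1)=1$, which is condition (1) of Lemma~\ref{PP}. Condition (2) there, that $X^r(h(X))^{s}$ permutes $\mu_{(q^2-1)/s}$, becomes that $g(X)=X^r(h(X))^{q-1}$ permutes $\mu_{q+1}$, which is condition (2) of Lemma~\ref{PP}. Finally I would note that since $h(X)\in\F_{q^2}[X]$ and $q-1$ is a positive integer, the polynomial $g(X)$ is well-defined over $\F_{q^2}$, so the ``if and only if'' statement transfers verbatim from the previous lemma.

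As noted, there is no real obstruction here; the only point one has to be slightly careful about is using $q^2-1$ (not $q-1$) as the order of the multiplicative group when invoking the previous lemma, since the ambient field is $\F_{q^2}$. Once that substitution is made, both the divisibility hypothesis and the simplification $(q^2-1)/(q-1)=q+1$ fall out of the factorization of $q^2-1$, and the proof is complete.
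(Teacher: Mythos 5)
Your proposal is correct and matches the paper exactly: the paper itself introduces Lemma~\ref{PP} as nothing more than the specialization of the preceding criterion to the field $\F_{q^2}$ with $s=q-1$, which is precisely the substitution you carry out. The bookkeeping ($\gcd(r,s)=\gcd(r,q-1)$ and $\mu_{(q^2-1)/(q-1)}=\mu_{q+1}$) is handled correctly.
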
 

The following lemma presents a result by Yadav, Gupta, Singh, and Yadav \cite{Harsh}, which we use throughout the paper.
\begin{lem}\cite[Lemma 2.2]{Harsh}{\label{roots}}
Let $q$ be any prime power co-prime to 3 and $\alpha,\beta \in \mathbb{N}$ be such that $\alpha>\beta$. Then, the polynomials $X^\alpha+ X^\beta + 1$ and $X^\alpha+X^{(\alpha-\beta)}+1$ have no roots in $\mu_{q+1}$ if any one of the following conditions holds
\begin{enumerate}
\item $\gcd(\alpha+\beta,q+1)=1$,
\item $\gcd(|\alpha-2\beta|,q+1)=1$,
\item $\gcd(2\alpha-\beta,q+1)=1$,
\item  $\gcd(3,q+1)=1$,
\end{enumerate}
where $| \cdot |$ denotes the usual absolute value.
\end{lem}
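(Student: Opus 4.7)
The plan is to exploit the structure of $\mu_{q+1}$, on which Frobenius acts as inversion, to extract enough multiplicative relations on any putative root $x \in \mu_{q+1}$ of either polynomial that each of the four hypotheses independently leads to a contradiction via the fact that $3 \ne 0$ in $\mathbb{F}_q$.

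First I would establish that the two polynomials share the same zero set inside $\mu_{q+1}$. For $x \in \mu_{q+1}$ one has $x^q = x^{-1}$; raising $x^\alpha + x^\beta + 1 = 0$ to the $q$-th power yields $x^{-\alpha} + x^{-\beta} + 1 = 0$, and multiplication by $x^\alpha$ gives $x^\alpha + x^{\alpha-\beta} + 1 = 0$. The reverse implication is symmetric, so any root of either polynomial in $\mu_{q+1}$ is a common root of both, and I may treat $x$ as such throughout.

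Next I would extract three exponent relations. Applying Frobenius to $x^\alpha + x^\beta + 1 = 0$ and then multiplying by $x^{\alpha+\beta}$ produces $x^\beta + x^\alpha + x^{\alpha+\beta} = 0$; comparison with the original equation forces $x^{\alpha+\beta} = 1$. The identical maneuver applied to $x^\alpha + x^{\alpha-\beta} + 1 = 0$ yields $x^{2\alpha-\beta} = 1$, and direct subtraction of the two equations gives $x^\beta = x^{\alpha-\beta}$, i.e., $x^{|\alpha-2\beta|} = 1$. Combined with $x^{q+1} = 1$, each of hypotheses (1), (2), (3) under its respective coprimality condition then forces the order of $x$ to be $1$, so $x = 1$; substituting back into $x^\alpha + x^\beta + 1 = 0$ yields $3 = 0$, contradicting $\gcd(3,q) = 1$.

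For condition (4) the above gcd trick is not directly applicable, so I would instead set $y = x^\beta \in \mu_{q+1}$ and use $x^{\alpha+\beta} = 1$ to rewrite $x^\alpha = y^{-1}$. The equation $x^\alpha + x^\beta + 1 = 0$ then collapses to $y + y^{-1} + 1 = 0$, whence $y^2 + y + 1 = 0$ and therefore $y^3 = 1$. Since $\gcd(3, q+1) = 1$, the group $\mu_{q+1}$ contains no nontrivial cube root of unity, so $y = 1$, and once again $1+1+1 = 3 = 0$ contradicts $\gcd(3,q) = 1$. I do not expect any real technical obstacle: the only non-routine ingredient is the Frobenius-based identification of the two polynomials' zero sets on $\mu_{q+1}$, after which the four cases reduce to clean order-divisibility arguments, with the sole bookkeeping care being the absolute value in $|\alpha - 2\beta|$ (which is automatic because exponent conditions in a cyclic group are invariant under sign).
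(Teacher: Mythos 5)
Your proof is correct: the identification of the two polynomials' zero sets on $\mu_{q+1}$ via $x^q=x^{-1}$, the derivation of $x^{\alpha+\beta}=1$, $x^{2\alpha-\beta}=1$ and $x^{|\alpha-2\beta|}=1$, and the reduction of case (4) to $y^2+y+1=0$ all check out, with each case terminating in the contradiction $3=0$. The paper itself states this lemma without proof, citing \cite[Lemma 2.2]{Harsh}; your argument is the standard one used there, so there is nothing to flag.
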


We also recall the Chinese Remainder Theorem, which will be needed later.
\begin{lem}[Chinese Remainder Theorem (CRT)]\label{CRT}
    Let \( a_1, a_2, \dots, a_t \) be any \( t \) integers, and let \( m_1, m_2, \dots, m_t \) be pairwise coprime positive integers. Then, there exists an integer \( X \) such that  
    \[
    X \equiv a_i \pmod{m_i}, \quad \text{for } i = 1,2,\dots,t.
    \]
    Moreover, the solution \( X \) is unique modulo \( M = m_1 m_2 \cdots m_t \).
\end{lem}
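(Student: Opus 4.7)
The plan is to prove the Chinese Remainder Theorem in two parts: establish existence through an explicit construction using modular inverses, then show uniqueness via a divisibility argument.

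For existence, I would set $M = m_1 m_2 \cdots m_t$ and, for each index $i \in \{1,2,\dots,t\}$, define $M_i = M/m_i$. Since the moduli are pairwise coprime, $\gcd(M_i, m_i) = 1$ for every $i$, so B\'ezout's identity produces an integer $y_i$ with $M_i y_i \equiv 1 \pmod{m_i}$. I would then propose the candidate
\[
X \;=\; \sum_{i=1}^{t} a_i\, M_i\, y_i,
\]
and verify the $t$ congruences directly. The key observation is that $m_i \mid M_j$ whenever $j \ne i$, so modulo $m_i$ only the $i$-th summand survives, giving $X \equiv a_i M_i y_i \equiv a_i \pmod{m_i}$, as required.

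For uniqueness, I would take two solutions $X, X'$ satisfying all $t$ congruences. Then $m_i \mid X - X'$ for every $i$. Since the $m_i$ are pairwise coprime, a short induction (or an appeal to unique factorization into primes) upgrades this to $M = m_1 m_2 \cdots m_t \mid X - X'$, so the solution is uniquely determined modulo $M$.

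The main obstacle: there is essentially none, as this is a classical result whose proof fits in a few lines. The one subtlety worth flagging is that \emph{pairwise} coprimality is used in two places — in defining the $y_i$ (so that $M_i$ is a unit modulo $m_i$) and in the uniqueness step (to pass from ``divisible by each $m_i$'' to ``divisible by the product''). Weakening to a global coprimality hypothesis would break both arguments, which is why the statement is formulated with pairwise coprimality.
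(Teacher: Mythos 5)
Your proof is correct and is the standard constructive argument; the paper itself states this lemma without proof, but the explicit construction you give (with $M_i = M/m_i$ and the inverses $y_i$) is exactly the one the authors invoke later in Section~6 when they write ``applying the proof of CRT'' and compute the inverses of $N_1$ and $N_2$ explicitly. So your approach matches what the paper relies on, and nothing further is needed.
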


\section{Three new classes of permutation trinomials}\label{S3}
In this section, we introduce three new classes of permutation trinomials over the finite field $\F_{q^2}$. We begin with the following lemma, which is used to determine our first family of permutation trinomials. 
\begin{lem}\label{Tr}
Let $m$ be a positive integer such that $\gcd(5,m)=1$. Let $b\in\mathbb{F}_{2^{10m}}$ be such that
$b^{10}+b^6+b^5+b^3+b^2+b+1=0.$ Then, we have
$\operatorname{Tr}_{2m}^{10m}(b)=\operatorname{Tr}_{2m}^{10m}(b^{33})=0.$
\end{lem}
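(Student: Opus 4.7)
The plan is to show first that the polynomial $f(X):=X^{10}+X^{6}+X^{5}+X^{3}+X^{2}+X+1$ is irreducible over $\F_{2}$, so that any $b$ satisfying $f(b)=0$ necessarily lies in $\F_{2^{10}}\subset \F_{2^{10m}}$. The two trace identities then follow from explicit computations using the defining relation $b^{10}=b^{6}+b^{5}+b^{3}+b^{2}+b+1$.

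To verify irreducibility of $f$ over $\F_{2}$, one checks $f(0)=f(1)=1$ and $f(\omega)\neq 0$ for $\omega\in\F_{4}\setminus\F_{2}$, ruling out roots in $\F_{2}$ and $\F_{4}$. A short polynomial division gives $X^{32}\equiv X^{7}+X^{5}+X\pmod{f(X)}$, so $\gcd(f(X),X^{32}+X)=1$ and $f$ has no root in $\F_{2^{5}}$. A further reduction yields $X^{2^{10}}\equiv X\pmod{f(X)}$, which places all ten roots of $f$ in $\F_{2^{10}}$. Together these show that $f$ is irreducible over $\F_{2}$ and hence $b\in \F_{2^{10}}^{\ast}$.

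For the first identity, the hypothesis $\gcd(5,m)=1$ enters as follows: since $b^{2^{10}}=b$, the exponent $2mi$ can be taken modulo $10$, and as $i$ runs over $\{0,1,2,3,4\}$ the residues $\{2mi\bmod 10\}$ form a permutation of $\{0,2,4,6,8\}$. Consequently
\[
\operatorname{Tr}_{2m}^{10m}(b)=b+b^{4}+b^{16}+b^{64}+b^{256},
\]
which coincides with the canonical trace $\operatorname{Tr}_{\F_{4}}^{\F_{2^{10}}}(b)$. Reducing $b^{16}, b^{64}, b^{256}$ modulo $f$ by iterated substitution and summing the five terms, each monomial $b^{j}$ appears an even number of times, so the sum vanishes.

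For the second identity, the crucial observation is that $b^{33}\in\F_{2^{5}}$: from $b\in\F_{2^{10}}^{\ast}$ one has $b^{1023}=1$, so $(b^{33})^{31}=1$, placing $b^{33}$ in $\F_{2^{5}}^{\ast}$. Therefore $(b^{33})^{2^{5}}=b^{33}$, and the same residue argument, now modulo $5$ and again using $\gcd(m,5)=1$, gives
\[
\operatorname{Tr}_{2m}^{10m}(b^{33})=b^{33}+b^{66}+b^{132}+b^{264}+b^{528}=\operatorname{Tr}_{2}^{5}(b^{33}).
\]
A somewhat longer reduction of these five powers modulo $f$, followed by a term-by-term summation, yields $0$ once more.

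The main obstacle is the arithmetic bookkeeping in the reductions modulo $f$, particularly the successive squarings needed to reach $b^{528}$; each is mechanical but error-prone. The conceptual content — irreducibility of $f$ and the identification of $\operatorname{Tr}_{2m}^{10m}$ with the canonical traces $\operatorname{Tr}_{\F_{4}}^{\F_{2^{10}}}$ and $\operatorname{Tr}_{2}^{5}$ — takes only a few lines.
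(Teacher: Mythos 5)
Your proposal is correct in structure and, for the first identity, follows essentially the same route as the paper: use $\gcd(m,5)=1$ to show the exponents $2mi \bmod 10$ permute $\{0,2,4,6,8\}$, so that $\operatorname{Tr}_{2m}^{10m}(b)$ collapses to $b+b^{4}+b^{16}+b^{64}+b^{256}$, and then reduce modulo the minimal polynomial and observe the cancellation (the paper writes $m=5k+r$ instead of arguing with residues, but this is the same idea). You additionally verify irreducibility of $f$, which the paper merely asserts; that is a worthwhile addition since the containment $b\in\F_{2^{10}}$ and the claim $b\neq 0$ both rest on it. Where you genuinely diverge is the second identity. The paper writes $b^{33}=b\cdot b^{2^{5}}$ and expands $\operatorname{Tr}_{2m}^{10m}(b^{33})=\sum_{i=0}^{4} b^{2^{2i}}b^{2^{2i+5}}$, which lets it recycle the explicit expressions for $b^{2^{2i}}$ already computed for the first identity. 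You instead note that $(b^{33})^{31}=b^{1023}=1$ forces $b^{33}\in\F_{2^{5}}$ (equivalently, $b^{33}$ is the norm of $b$ to $\F_{2^{5}}$), and then the same residue argument modulo $5$ identifies $\operatorname{Tr}_{2m}^{10m}(b^{33})$ with the absolute trace $\operatorname{Tr}_{2}^{5}(b^{33})$. Your route is conceptually cleaner and explains \emph{why} the answer lands in $\F_{2}$, but it costs you a fresh reduction of $b^{33}, b^{66},\dots, b^{528}$ modulo $f$, whereas the paper's version of the final check reuses earlier work. The only thing actually missing from your write-up is that this last computation is asserted rather than exhibited; since your reduction to $\operatorname{Tr}_{2}^{5}(b^{33})$ is valid and the quantity is a single element of $\F_{2}$ determined by a finite calculation, this is a bookkeeping omission rather than a gap, but it should be carried out (or at least the value of $b^{33}$ modulo $f$ recorded) in a final version.
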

 \begin{proof} Since $\gcd(5,m)=1$ and $b$ satisfies the irreducible polynomial $X^{10}+X^6+X^5+X^3+X^2+X+1$ over $\F_2$, it follows that $b\not\in \F_{2^{2m}}$ and $b\in\mathbb{F}_{2^{10}}\subseteq\mathbb{F}_{2^{10m}}$. In addition, we can write $m=5k+r$ for some integers $k\geq 1$ and $0< r<5$. This implies that
\begin{equation}{\label{trace}}
\operatorname{Tr}_{2m}^{10m}(b)=b+b^{2^{2r}}+b^{2^{4r}}+b^{2^{{6r}}}+b^{2^{{8r}}}=b+b^{2^2}+b^{2^4}+b^{2^{6}}+b^{2^{8}}.
\end{equation}
Now, using the relations $b^{10} + b^6 + b^5 + b^3 + b^2 + b + 1=0 $ and $b^{2^{10}}=b$, we compute\
\begin{equation*}
\begin{tabular}{cc}
$\begin{array}{l}
b^{2^4}=b+b^5+b^7+b^9,\\
b^{2^6}=b^2+b^4+b^5+b^6+b^7+b^9,\\
b^{2^8}=b^2+b^6.		
\end{array}$
\end{tabular}
\end{equation*}
Substituting the obtained values of $b^{2^i}$ for $i \in \{4,6,8\}$ in Equation \eqref{trace}, we get
$$\operatorname{Tr}_{2m}^{10m}(b)=b+b^{2^2}+b^{2^4}+b^{2^{6}}+b^{2^{8}}=0.$$
Consequently, we get
\begin{align*}
\operatorname{Tr}_{2m}^{10m}(b^{33})&={bb^{2^{5}}+{(bb^{2^5})}^{2^2}+{(bb^{2^5})}^{2^4}+{(bb^{2^5})}^{2^6}+{(bb^{2^5})}^{2^8}}\\&=bb^{2^5}+b^{2^2}b^{2^7}+b^{2^4}b^{2^9}+b^{2^6}b^2+b^{2^8}b^{2^3}=0.
\end{align*}
\end{proof}

We now introduce our first class of permutation trinomials, where $r=11$, $\alpha=10$ and $\beta=4$.
\begin{thm}\label{T1}
    Let \( q = 2^m \). Then, the trinomial $F_{1}(X) = X^{11} \left( X^{10(q-1)} + X^{4(q-1)} + 1 \right)$ permutes \( \mathbb{F}_{q^2} \) if and only if \( m \not\equiv 0 \pmod{5} \).
\end{thm}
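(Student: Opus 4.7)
The plan is to apply Lemma~\ref{PP} with $r=11$, $s=q-1$, $h(X)=X^{10}+X^{4}+1$. Condition (1), $\gcd(11,q-1)=1$, is equivalent to $10\nmid m$ since $\mathrm{ord}_{11}(2)=10$; it holds under the hypothesis $5\nmid m$, and fails (yielding one subcase of the ``only if'' direction immediately) whenever $10\mid m$. On $\mu_{q+1}$ the identity $X^{q}=X^{-1}$ lets me simplify
\[
g(X):=X^{11}(X^{10}+X^{4}+1)^{q-1}=\frac{X(X^{10}+X^{6}+1)}{X^{10}+X^{4}+1},
\]
where Lemma~\ref{roots}(2), applied with $\alpha=10$, $\beta=4$, $|\alpha-2\beta|=2$ coprime to the odd integer $q+1$, ensures that neither $X^{10}+X^{4}+1$ nor $X^{10}+X^{6}+1$ has a zero on $\mu_{q+1}$, so $g$ is a well-defined map $\mu_{q+1}\to\mu_{q+1}$.

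To prove injectivity of $g$ under $5\nmid m$, I would cross-multiply $g(X)=g(Y)$ to obtain the symmetric identity
\[
F(X,Y):=X(X^{10}+X^{6}+1)(Y^{10}+Y^{4}+1)+Y(Y^{10}+Y^{6}+1)(X^{10}+X^{4}+1)=0,
\]
which is divisible by $X+Y$ in characteristic $2$. After substituting $Y=uX$ with $u\in\mu_{q+1}\setminus\{1\}$ and dividing out the obvious factor $X(1+u)$, the plan is to carry out the algorithmic bivariate factorization advertised in the introduction and argue that every nondiagonal root $(X_{0},u_{0})\in\mu_{q+1}^{2}$ of the resulting polynomial $T(X,u)$ forces $u_{0}$ to be a root of the irreducible degree-$10$ polynomial $u^{10}+u^{6}+u^{5}+u^{3}+u^{2}+u+1$. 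Since any such $u_{0}$ generates $\F_{2^{10}}$, it can lie in $\F_{q^{2}}=\F_{2^{2m}}$ only when $\F_{2^{10}}\subseteq\F_{2^{2m}}$, that is, when $5\mid m$, contrary to the hypothesis. Lemma~\ref{Tr}, which provides $\operatorname{Tr}_{2m}^{10m}(u_{0})=\operatorname{Tr}_{2m}^{10m}(u_{0}^{33})=0$ for such a $u_{0}$, is the precise tool for excluding the auxiliary (lower-degree) factors of $T$ that might superficially admit a nondiagonal solution on $\mu_{q+1}^{2}$.

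For the remaining converse subcase ($5\mid m$, $10\nmid m$), the containment $\F_{2^{10}}\subseteq\F_{q^{2}}$ now places a root $u_{0}$ of $u^{10}+u^{6}+u^{5}+u^{3}+u^{2}+u+1$ inside $\F_{q^{2}}$, and the same factored form of $F$ is used in reverse to exhibit an explicit pair $X_{1}\neq X_{2}\in\mu_{q+1}$ with $g(X_{1})=g(X_{2})$, violating condition (2) of Lemma~\ref{PP}. The main obstacle throughout will be the bivariate factorization step: isolating the distinguished degree-$10$ irreducible factor in $u$ and certifying that every other factor of $T(X,u)$ is identically nonzero on $\mu_{q+1}^{2}$ is a nontrivial, computer-algebra-assisted computation in which the trace identities of Lemma~\ref{Tr} supply exactly the arithmetic input needed to dispose of the spurious cofactors; once this is done, the dichotomy $5\mid m$ versus $5\nmid m$ emerges naturally from the containment $\F_{2^{10}}\subseteq\F_{2^{2m}}$.
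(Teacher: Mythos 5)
Your reduction is set up correctly and matches the paper's: Lemma~\ref{PP} with $r=11$, the observation that $\gcd(11,q-1)=1$ iff $10\nmid m$ (which also disposes of the converse when $10\mid m$), Lemma~\ref{roots}(2) with $|\alpha-2\beta|=2$, the rewriting $G_1(X)=\frac{X(X^{10}+X^{6}+1)}{X^{10}+X^{4}+1}$ on $\mu_{q+1}$, and the cross-multiplied symmetric polynomial divisible by $X+Y$. The gap is in the step you yourself flag as the main obstacle: the factorization does not have the structure you are counting on. After removing $X+Y$, the degree-$20$ polynomial factors over $\F_{q^{10}}$ into five \emph{symmetric bidegree-$(2,2)$} factors $f_i(X,Y)=X^2Y^2+b^{33\cdot 2^{j}}(X^2+Y^2)+b^{33\cdot 2^{j'}}XY+1$, where $b$ is a root of $X^{10}+X^6+X^5+X^3+X^2+X+1$. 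None of these is a polynomial in $u=Y/X$ (they are not even homogeneous), so there is no ``distinguished degree-$10$ irreducible factor in $u$,'' and the claim that every nondiagonal root forces $u_0$ to satisfy that degree-$10$ polynomial is false over $\overline{\F}_q$: each $f_i=0$ is a curve on which $Y/X$ takes infinitely many values. The actual exclusion, for \emph{every} factor (not just ``auxiliary lower-degree'' ones), runs through Lemma~\ref{Tr}: the trace identities kill the $b^{33\cdot 2^{j}}$-coefficients and reduce $f_i=0$ on $\mu_{q+1}^2$ to $XY=1$, i.e.\ $Y=X^{q}$; substituting back forces $b^{495}\in\F_q$, and since $b^{495}$ has order $31$ and generates $\F_{2^{5}}$, this is impossible precisely when $5\nmid m$. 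So the dichotomy does come from a subfield containment, but via $\F_{2^{5}}\not\subseteq\F_{2^{m}}$ applied to a specific power of $b$, not via your claimed factor in $u$.

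The second gap is the converse subcase $m\equiv 5\pmod{10}$, where $\gcd(11,q-1)=1$ still holds, so condition (1) gives you nothing. Your plan to ``use the factored form in reverse'' to exhibit a collision is unsubstantiated: knowing $\F_{2^{10}}\subseteq\F_{q^2}$ puts $b$ in $\F_{q^2}$ but does not produce a point of any $f_i$ on $\mu_{q+1}\times\mu_{q+1}$, which is what a collision requires. The paper instead argues by counting: for $m=5k$ with $k$ odd one has $11\mid q+1$ and $3\mid q+1$, and $G_1$ maps the eleven elements of $\mu_{11}\subset\mu_{q+1}$ into $\mu_3$, so it cannot be injective. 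You would need this (or an equally concrete) argument to close the ``only if'' direction.
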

\begin{proof} We first assume that $m\not\equiv0\pmod{5}$. From Lemma~\ref{PP}, for $F_1$ to be a permutation polynomial over $\F_{q^2}$, it is sufficient to show that $\gcd(11, q-1) = 1$ and $G_1(X)=X^{11}(X^{10}+X^4+1)^{q-1}$ permutes $\mu_{q+1}$.  Since $m\not\equiv 0\pmod{5}$,
it is straightforward to observe that $\gcd(11,q-1)=1$.
Additionally, by Lemma~\ref{roots}, the polynomial $X^{10}+X^{4} +1$ has no roots in $\mu_{q+1}$. Therefore, we can write $G_1(X)=\dfrac{X^{11}+X^7+X}{X^{10}+X^4+1}$ over $\mu_{q+1}$. Now, to prove that $G_1(X)$ permutes $\mu_{q+1}$, we show that $G_1(X)$ is injective on $\mu_{q+1}$. Suppose that $G_1(X)=G_1(Y)$ for some $X,Y \in \mu_{q+1}$. This leads us to the following equation
\begin{equation*}
    (X^{11}+X^7+X)(Y^{10}+Y^4+1)+(Y^{11}+Y^7+Y)(X^{10}+X^4+1)=0.
\end{equation*}
We denote the left-hand side of the above equation as $f(X, Y)$. By using SageMath, we get that $f(X,Y)$ can be factored into the product of six polynomials over the quintic extension $\F_{(q^2)^{5}}$ of $\F_{q^2}$ as following
$$f(X, Y)=(X + Y)f_1(X, Y)f_2(X, Y)f_3(X, Y)f_4(X, Y)f_5(X, Y), \mbox{~~where}$$ 
\begin{align*}
&f_1(X,Y)=X^2Y^2+b^{33}(X^2+Y^2)+b^{528}XY+1, \\&
f_2(X,Y)=X^2Y^2+b^{66}(X^2+Y^2)+b^{33}XY+1, \\& 
f_3(X,Y)=X^2Y^2+b^{132}(X^2+Y^2)+b^{66}XY+1, \\&
f_4(X,Y)=X^2Y^2+b^{264}(X^2+Y^2)+b^{132}XY+1, \\&
f_5(X, Y)=X^2Y^2+b^{528}(X^2+Y^2)+b^{264}XY+1,  
\end{align*}
and $b \in \F_{2^{10}} \subset \F_{q^{10}}$ is a $(2^{10}-1)$th primitive root of unity satisfying $X^{10} + X^6 + X^5 + X^3 + X^2 + X + 1 $. Since $m \not \equiv 0 \pmod 5$, it follows that $b \not \in \F_{q^2}$ but $b \in \F_{q^{10}}$. 

Clearly, $f(X, Y) = 0$ if and only if either $X=Y$ or $f_i(X,Y)=0$ for some $1 \leq i \leq 5$ in $\mathbb{F}_{q^{10}}$. First, suppose that $f_{1}(X,Y)=0$, which implies $\Tr_{2m}^{10m}(f_1(X,Y))=0$. From Lemma \ref{Tr}, we obtain $\Tr_{2m}^{10m}(f_1(X,Y))=X^2Y^2+1$. Thus, we have $X^2Y^2+1=(XY+1)^2=0$ i.e., $XY=1$. Substituting $XY=1$ or $X= \frac{1}{Y}$ into the expression for $f_1(X, Y)$, we get $Y=X^q$ and this will further imply that $b^{495}\in\F_{q}.$ However, since the multiplicative order of $b^{495}$ in $\mathbb{F}_{2^{10}}^{*}$ is $31$, and $5$ is the least positive integer such that $q^{5}\equiv 1\pmod{31}$, we get that $b^{495}\in\F_{q^5}\setminus \F_{q}$, a contradiction. This disproves our assumption that $f_{1}(X,Y)=0$ for every $X,Y\in \mu_{q+1}$. Similarly, one can verify that $f_{i}(X,Y)\neq 0$ for $X,Y\in \mu_{q+1}$ where $i\in\{2,3,4,5\}$. Therefore, the only possibility for $f(X,Y)=0$ is when $X+Y=0$, which in turn implies that $G_1(X)$ is injective on $\mu_{q+1}$.
	
Conversely, let $F_1(X)$ permute $\F_{q^2}$ for some $m \equiv 0 \pmod 5$, or equivalently, for $m=5k$ where $k$ is a positive integer. If $k$ is even, then $\gcd(11, 2^m-1) \neq 1$. By Lemma~\ref{PP} this contradiction the assumption that $F_1(X)$ permutes $\F_{q^2}$. Next, if $k$ is odd, we have $q+1 \equiv 0 \pmod{11}$ and $q + 1 \equiv 0 \pmod{3}$. In this case, we can choose an element $\alpha \in \mu_{11} \subset \mu_{q+1}$ and then verify that $G_{1}(\alpha) \in \mu_3 \subset \mu_{q+1}$ for each $\alpha \in \mu_{11}$. Therefore, we can conclude that $G_{1}(X)$ does not permute $\mu_{q+1}$. Consequently, by Lemma~\ref{PP}, we get a contradiction to the assumption that $F_{1}(X)$ permutes $\mathbb{F}_{q^2}$.
\end{proof}
Next, we present another class of permutation trinomials over $\F_{q^2}$ with $r=9, \alpha=8$ and $\beta=6$.
\begin{thm}\label{T2}
Let $q=2^m$, then the trinomial $F_{2}(X)=X^{9}(X^{8(q-1)} + X^{6(q-1)} + 1)$ permutes $\F_{q^2}$ if and only if $m$ is odd.
\end{thm}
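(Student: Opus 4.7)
The plan is to invoke Lemma~\ref{PP} with $r=9$ and $h(X)=X^8+X^6+1$, mirroring the proof of Theorem~\ref{T1}. Setting $G_2(X) := X^9(X^8+X^6+1)^{q-1}$, I need to verify (i) $\gcd(9,q-1)=1$ and (ii) $G_2$ permutes $\mu_{q+1}$. For the necessity direction, the multiplicative order of $2$ modulo $9$ is $6$, so $9\mid 2^m-1$ iff $6\mid m$; moreover $3\mid 2^m-1$ iff $m$ is even. Hence $\gcd(9,2^m-1)=1$ exactly when $m$ is odd, and $m$ even already violates Lemma~\ref{PP}(1), disposing of one direction.

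For sufficiency, assume $m$ is odd, so $q+1$ is odd and $\gcd(|8-2\cdot 6|,q+1)=\gcd(4,q+1)=1$; by Lemma~\ref{roots}, $X^8+X^6+1$ has no zero in $\mu_{q+1}$. Using $X^q=X^{-1}$ on $\mu_{q+1}$, I would rewrite
$$G_2(X) = \frac{X^9+X^3+X}{X^8+X^6+1}.$$
Injectivity of $G_2$ on $\mu_{q+1}$ then amounts to showing that
$$f(X,Y) := (X^9+X^3+X)(Y^8+Y^6+1)+(Y^9+Y^3+Y)(X^8+X^6+1)$$
vanishes on $\mu_{q+1}\times\mu_{q+1}$ only along the diagonal. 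Setting $X=Y$ confirms $(X+Y)\mid f(X,Y)$, and I would use SageMath to obtain a factorization $f(X,Y)=(X+Y)\prod_i f_i(X,Y)$ over a suitable extension of $\F_{q^2}$, with each $f_i$ a symmetric quadratic of the form $X^2Y^2+c_i(X^2+Y^2)+d_iXY+1$ whose coefficients form a Frobenius orbit of powers of a primitive root of unity $b$ lying in some $\F_{2^\ell}$. A natural candidate is $b\in\F_{2^4}$ (a root of $X^4+X^3+1$), motivated by the identity $X^8+X^6+1=(X^4+X^3+1)^2$ in characteristic $2$; for odd $m$, one has $b\notin\F_{q^2}$ since $4\nmid 2m$.

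The principal obstacle is ruling out zeros of each $f_i$ in $\mu_{q+1}\times\mu_{q+1}$. Following the blueprint of Theorem~\ref{T1}, I would prove a trace-vanishing lemma analogous to Lemma~\ref{Tr} for the minimal polynomial of $b$, so that applying the relative trace from $\F_{2^{\ell m}}$ down to $\F_{q^2}$ to $f_i(X,Y)=0$ collapses it to $(XY+1)^2=0$, forcing $Y=X^q$. Back-substituting $Y=X^q$ into $f_i$ should then force a specific power $b^e$ to lie in $\F_q$, which contradicts an order computation based on the multiplicative order of $b$ in $\F_{2^\ell}^*$ combined with the oddness of $m$. Identifying the correct extension $\F_{2^\ell}$, producing the required trace identity, and ruling out $b^e\in\F_q$ by an order argument is the technical core of the proof; once these ingredients are in place, the remaining injectivity argument proceeds exactly as in Theorem~\ref{T1}.
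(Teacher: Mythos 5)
Your setup matches the paper's proof in every structural respect: the reduction via Lemma~\ref{PP}, the necessity argument from $\gcd(9,q-1)$, the fractional form of $G_2$ on $\mu_{q+1}$, the equation $f(X,Y)=0$, and even the correct factorization field $\F_{(q^2)^2}$ with $b$ a primitive fifteenth root of unity satisfying $b^4+b^3+1=0$ (the paper's factors are $f_i(X,Y)=X^2Y^2+b^{2^{i-1}}(X^2+Y^2)+b^{2^{i+2}}XY+1$ for $1\le i\le 4$). The gap sits precisely in the step you yourself flag as the technical core: the trace-vanishing lemma you propose, analogous to Lemma~\ref{Tr}, cannot exist here. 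For $m$ odd, $\Tr_{2m}^{4m}$ restricted to $\F_{2^4}$ is the map $x\mapsto x+x^4$, whose kernel is exactly $\F_4$; since $b$ has order $15$, $b\notin\F_4$ and $\Tr_{2m}^{4m}(b)=b+b^4=b^5\neq 0$. Consequently, applying the trace to $f_i(X,Y)=0$ does not collapse it to $(XY+1)^2=0$. The terms the trace kills are instead $X^2Y^2$ and $1$ (both lie in $\F_{q^2}$), and for $f_1$ one is left with the homogeneous relation $b^5(X^2+Y^2)+b^{10}XY=0$, i.e., $X^2+Y^2=b^5XY$.

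The paper's contradiction therefore runs through a different mechanism from the one you sketch: comparing the square and the $(q+1)$-th power of $X^2+Y^2=b^5XY$ (using $X,Y\in\mu_{q+1}$, so $(XY)^{q+1}=1$ while $(X^2+Y^2)^{q+1}=(X^2+Y^2)^2(XY)^{-2}$) gives $b^{5(q+1)}=b^{10}$, hence $b^5\in\F_q$; but $b^5$ has multiplicative order $3$ and $\gcd(3,2^m-1)=1$ for odd $m$, forcing $b^5=1$, contradicting primitivity. The factors $f_2,f_4$ give the companion relation $b^5(X^2+Y^2)+XY=0$ and are handled the same way. So your endgame --- forcing a power of $b$ into $\F_q$ and refuting it by an order computation --- is the right target, but the path to it must exploit the \emph{nonvanishing} of the trace of the coefficients rather than its vanishing; as written, your plan stalls at a trace identity that is false in this setting.
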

\begin{proof}
It is known from Lemma~\ref{PP} that $F_2(X)$ permutes the finite field $\F_{q^2}$ if and only if $\gcd(9, q-1) = 1$ and $G_2(X)=X^{9}(X^8+X^6+1)^{q-1}$ permutes $\mu_{q+1}$. We first assume that $m$ is odd which ensures $\gcd(9, q-1)=1$. Thus, it suffices to show that $G_2(X)$ permutes $\mu_{q+1}$. It is straightforward to verify that if $X \in \mu_{q+1}$, then $G_2(X)$ also belongs to the unit circle $\mu_{q+1}$. Since, from Lemma~\ref{roots}, $X^{8}+X^{6}+1$ has no roots in $\mu_{q+1}$, we can express $G_2(X)$ as $G_2(X)=\dfrac{X^9+X^3+X}{X^8+X^6+1}$ over $\mu_{q+1}$. Now, we proceed as in Theorem~\ref{T1} to establish the injectivity of $G_2(X)$ on $\mu_{q+1}$.
Let $G_2(X) = G_2(Y)$ for some $X,Y \in \mu_{q+1}$. Then, we obtain the following equation
\begin{align*}
& X^9Y^8+X^8Y^9+X^9Y^6+X^6Y^9+X^3Y^8+X^8Y^3+X^3Y^6+X^6Y^3
\\& \quad + XY^8  +X^8Y+XY^6+X^6Y+X^9+Y^9+X^3+Y^3+X+Y=0.
\end{align*}
If $f(X, Y)$ denotes the left-hand side of the above equation, then using SageMath, $f(X,Y)$ can be factored as the product of five polynomials over quadratic extension $\F_{(q^2)^2}$ of $\F_{q^2}$ as follows
\begin{align*}
 &f(X, Y)=(X + Y)f_1(X, Y)f_2(X, Y)f_3(X, Y)f_4(X, Y), 
\end{align*}
where $f_i(X,Y)=X^2Y^2 + b^{2^{i-1}}(X^2+Y^2) + b^{2^{i+2}} XY + 1 \mbox{~for~} 1 \leq i \leq 4,$  and $b \in \F_{2^4} \subset \F_{q^4}$ is a fifteenth primitive root of unity satisfying $b^4 + b^3 + 1 = 0$. Since $m$ is odd, we have $b \in \F_{q^4} \setminus \F_{q^2}$. 

It is evident that $f(X, Y) = 0$ if and only if either $X=Y$ or $f_i(X,Y)=0$ for any $1 \leq i \leq 4$ $\text{ in } \F_{q^4}$. Let us first consider $f_1(X,Y)=0$. This implies that 
$$\Tr_{2m}^{4m}(f_1(X,Y))=\Tr_{2m}^{4m}(b)(X^2+Y^2)+\Tr_{2m}^{4m}(b^8)XY=0.$$
Since $m$ is odd and $2^{2m}\equiv4\pmod {15}$, we get $\Tr_{2m}^{4m}(b)=b^5$. Substituting the value of $\Tr_{2m}^{4m}(b)$ in the expression for $\Tr_{2m}^{4m}(f_1(X,Y))$, we obtain $X^2+Y^2+b^5(XY)=0$. Raising $X^2+Y^2+b^5(XY)=0$ to the powers $q+1$ and $2$, we get the equations $(X^2+Y^2)^2+b^{5q+5}(XY)^2=0$ and $(X^2+Y^2)^2+b^{10}(XY)^2=0$, respectively. This further renders $b^{5q+5}=b^{10}$, or equivalently, $b^5 \in \F_q$. However, since $b$ is the fifteenth primitive root of unity, the multiplicative order of $b^5$ is three. 
As $m$ is odd, it follows that $b^5 = 1$, which contradicts the fact that $b$ is a primitive fifteenth root of unity. This, in turn, contradicts the assumption that $f_1(X, Y) = 0$ for every $X, Y \in \mu_{q+1}$. A similar approach can be applied to $f_3(X,Y)$ since $\Tr_{2m}^{4m}(f_1(X,Y))=\Tr_{2m}^{4m}(f_3(X,Y))$. Moreover, upon computing $\Tr_{2m}^{4m}(f_2(X,Y))=0$ (or, respectively, $\Tr_{2m}^{4m}(f_4(X,Y))=0$), we obtain $b^5(X^2+Y^2)+XY=0$. Hence, using similar arguments as those applied for $f_1(X,Y)$, we can show that $f_2(X,Y) \ne 0$ and $f_4(X,Y) \ne 0$. Therefore, we conclude that $X=Y$, or equivalently, $G_2(X)$ is injective on $\mu_{q+1}$. 

For the converse part, suppose $m$ is an even positive integer. In this case, we get $2^{m} \equiv 1 \pmod{3}$, and therefore $\gcd(9, 2^m - 1) \geq 3$. Thus, from Lemma~\ref{PP}, we get that $F_2(X)$ does not permute $\F_{q^2}$. 

\end{proof}

In the following theorem, we provide our third class of permutation trinomials over the finite field $\F_{q^2}$.
\begin{thm}\label{T3}
Let $q=2^m$, then the trinomial $F_{3}(X)=X^{7}(X^{7(q-1)} + X^{5(q-1)} + 1)$ permutes $\F_{q^2}$ if and only if $m$ is even and $m \not \equiv 0 \pmod 3$.
\end{thm}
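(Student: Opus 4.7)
The overall strategy mirrors Theorems~\ref{T1} and~\ref{T2}: Lemma~\ref{PP} converts the permutation property on $\F_{q^{2}}$ into an elementary gcd condition together with a permutation statement on the unit circle $\mu_{q+1}$, which is then settled by an explicit factorisation of a symmetric two-variable polynomial combined with relative-trace obstructions.

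For the forward direction, assume $m$ is even with $m\not\equiv 0\pmod 3$. Since the multiplicative order of $2$ modulo $7$ is $3$, the hypothesis $3\nmid m$ immediately yields $\gcd(7,q-1)=1$; and since $m$ is even, $2^{m}\equiv 1\pmod 3$, so $\gcd(3,q+1)=1$ and Lemma~\ref{roots}(4) guarantees that $X^{7}+X^{5}+1$ has no root in $\mu_{q+1}$. Using $X^{q}=X^{-1}$ on $\mu_{q+1}$, I would simplify
\[
G_{3}(X)=X^{7}(X^{7}+X^{5}+1)^{q-1}=\frac{X^{7}+X^{2}+1}{X^{7}+X^{5}+1},
\]
so that showing $G_{3}$ permutes $\mu_{q+1}$ reduces to showing that the symmetric polynomial
\[
f(X,Y)=(X^{7}+X^{2}+1)(Y^{7}+Y^{5}+1)+(Y^{7}+Y^{2}+1)(X^{7}+X^{5}+1)
\]
has no zero $(X,Y)\in\mu_{q+1}\times\mu_{q+1}$ off the diagonal; a short direct check gives $(X+Y)\mid f(X,Y)$, absorbing the diagonal.

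The step I expect to be the main obstacle is to factor $f(X,Y)/(X+Y)$ explicitly over a suitable extension of $\F_{q^{2}}$ using a computer algebra system, and to rule out zeros of each non-trivial factor on $\mu_{q+1}\times\mu_{q+1}$. Since $X^{7}+X^{5}+1=(X^{2}+X+1)(X^{5}+X^{4}+X^{3}+X+1)$ splits over $\F_{2^{10}}$, with the two irreducible factors contributing roots of multiplicative orders $3$ and $31$ respectively, I expect the resulting factors of $f$ to have coefficients expressible in powers of a root of unity $b\in\F_{2^{10}}^{*}$. For each such factor, the plan is to apply an appropriate relative trace $\Tr_{2m}^{km}$, combine with the unit-circle constraints $X^{q+1}=Y^{q+1}=1$, and manipulate the resulting identity (raising to $q$-th and $(q+1)$-st powers as needed) to force a specific power of $b$ into $\F_{q}$. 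Matching this against the known multiplicative order of that power of $b$, together with the parity of $m$ and $3\nmid m$, should yield a contradiction in the style of the $b^{495}\in\F_{q^{5}}\setminus\F_{q}$ argument in Theorem~\ref{T1} or the $b^{5}\in\F_{q}$ argument in Theorem~\ref{T2}. Excluding every non-trivial factor in this way leaves only $X=Y$, proving that $G_{3}$ is injective and hence permutes $\mu_{q+1}$.

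For the converse, I argue contrapositively. If $3\mid m$, then $7\mid 2^{m}-1=q-1$, so $\gcd(7,q-1)\neq 1$ and Lemma~\ref{PP} immediately rules out $F_{3}$ being a permutation. Otherwise $m$ is odd and $3\nmid m$, so $2^{m}\equiv 2\pmod 3$, hence $3\mid q+1$ and $\mu_{3}\subset\mu_{q+1}$. Taking a primitive cube root of unity $\omega\in\mu_{q+1}$, we have $\omega^{7}+\omega^{5}+1=\omega+\omega^{2}+1=0$, so $G_{3}(\omega)=\omega^{7}\cdot 0^{q-1}=0\notin\mu_{q+1}$. Thus $G_{3}$ does not even map $\mu_{q+1}$ into itself, and a second application of Lemma~\ref{PP} shows $F_{3}$ does not permute $\F_{q^{2}}$. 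The two subcases jointly finish the characterisation.
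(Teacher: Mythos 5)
Your converse direction is complete and correct: the case $3\mid m$ kills $\gcd(7,q-1)=1$, and for $m$ odd with $3\nmid m$ your observation that a primitive cube root of unity $\omega\in\mu_{q+1}$ satisfies $\omega^{7}+\omega^{5}+1=0$, so that $G_{3}(\omega)=0\notin\mu_{q+1}$, is a clean (and slightly more direct) version of the paper's argument, which instead evaluates $F_{3}$ at an element of order $3(q-1)$ in $\F_{q^{2}}^{*}$. The setup of the forward direction (the gcd computation, the use of Lemma~\ref{roots}(4), the reduction to $f(X,Y)$ and the factor $X+Y$) also matches the paper exactly.

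The genuine gap is in the heart of the forward direction: you defer the factorisation of $f(X,Y)/(X+Y)$ and the exclusion of zeros of its factors on $\mu_{q+1}\times\mu_{q+1}$ to a future computation, and the structure you anticipate is not the one that actually occurs, so your planned exclusion arguments would not go through as described. The true factorisation is
\[
f(X,Y)=(X+Y)(X^{2}+X+1)(Y^{2}+Y+1)(X^{2}Y+XY^{2}+X+Y+1)(X^{2}Y^{2}+X^{2}Y+XY^{2}+X+Y),
\]
i.e.\ everything happens over $\F_{2}$ (the quadratics split further only over $\F_{4}$, via cube roots of unity); no element of order $31$ and no passage to $\F_{2^{10}}$ is involved, despite the factorisation $X^{7}+X^{5}+1=(X^{2}+X+1)(X^{5}+X^{4}+X^{3}+X+1)$ that motivated your guess. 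Consequently the relative-trace obstruction you propose (forcing a power of $b$ into $\F_{q}$, as in Theorems~\ref{T1} and~\ref{T2}) has no purchase on the last two factors, whose coefficients already lie in $\F_{2}$. For the factors $X^{2}+X+1$ and $Y^{2}+Y+1$ the exclusion is immediate from $3\nmid q+1$ (here your converse computation reappears with the roles reversed), while for $f_{5}=X^{2}Y+XY^{2}+X+Y+1$ and $f_{6}=X^{2}Y^{2}+X^{2}Y+XY^{2}+X+Y$ one needs a different argument: raising $f_{5}(X,Y)=0$ to the powers $q+1$ and $2$ and comparing gives $(XY+1)^{2}(X+Y)^{2}=(XY)^{2}=1$, hence $XY=1$, and substituting back yields $f_{5}(X,Y)=1$, a contradiction (similarly for $f_{6}$). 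Until these factor-by-factor exclusions are actually carried out, the forward implication is only a plausible programme, not a proof.
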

\begin{proof}
From Lemma~\ref{PP}, it is clear that $F_3(X)$ permutes $\F_{q^2}$ if and only if $\gcd(7, q-1) = 1$ and $G_3(X)=X^{7}(X^{7}+X^5+1)^{q-1}$ permutes $\mu_{q+1}$. We first consider the case where $m$ is even and $m \not \equiv 0 \pmod 3$. 
Since $m\not\equiv 0 \pmod{3}$, we obtain that $\gcd(7,q-1)=1$. From Lemma~\ref{roots}, it is known that $X^{7}+X^{5}+1 $ has no roots in $\mu_{q+1}$; hence, $G_3(X)$ can be written as $\dfrac{X^7+X^2+1}{X^7+X^5+1}$ over $\mu_{q+1}$. It is enough to show that if $G_3(X) = G_3(Y)$, for some $X, Y \in \mu_{q+1}$, then $X=Y$. On computing $G_3(X)=G_3(Y)$, we obtain the following equation
\begin{align*}
f(X,Y) & = X^7Y^5+X^5Y^7+X^7Y^2+X^2Y^7+X^2Y^5+X^5Y^2+X^7Y \\&  
\quad + XY^7 + XY^5   +X^5Y + X^7+Y^7+X^2+Y^2+X+Y=0.
\end{align*}
This can be verified using SageMath that $f(X, Y)= (X + Y) \prod_{i=1}^{6} f_i(X, Y)$ over the cubic extension $\F_{(q^2)^3}$ of $\F_{q^2}$, where 
\begin{align*}
		&f_i(X,Y)=Y+b^{21 i}~ \mbox{ for } i\in\{1,2\},\\&
		f_i(X,Y)=X+b^{21(i-2)} ~\mbox{ for }i\in\{3,4\},\\&
		f_5(X,Y)=X^2Y+XY^2+X+Y+1,\\&
		f_6(X,Y)=X^2Y^2+X^2Y+XY^2+X+Y,
\end{align*}
and $b \in \F_{2^6} \subset \F_{q^6}$ is a $63\text{rd}$ primitive root of unity satisfying $b^6+b^4 + b^3 + b+1 = 0$. Since $\gcd(m,3)=1$, we have $b \not \in \F_{q^2}$ but $b \in \F_{q^6}$. 
    
Our goal is now to show that $f_i(X, Y) \ne 0$ for any $1 \leq i \leq 6$ in $\F_{q^6}$. Note that if $f_1(X,Y) = 0$, then $Y^{q+1} = b^{21(q+1)} = 1$. This is possible only when $63 \mid 21(q+1)$, or equivalently when $q \equiv 2 \pmod 3$, which contradicts the assumption that $m$ is even and $m \not \equiv 0 \pmod 3$. Similar arguments can be used to show that $f_i(X,Y) \ne 0,~ 2 \leq i \leq 4$. Next, we assume that $f_5(X,Y)=0$. Then, raising $f_5(X,Y)$ to the powers $q+1$ and $2$, we obtain the equations $(XY+1)^2(X+Y)^2=(XY)^2$ and $(XY+1)^2(X+Y)^2=1$, respectively. This further gives us $XY=1$. Substituting $XY=1$ in the expression of $f_5(X,Y)$, we get $f_5(X,Y)=1$, a contradiction. By applying a similar approach to that used for $f_5(X,Y)$, one can show that $f_6(X, Y) \ne 0$. Thus, $f(X, Y)=0$ holds if and only if $X=Y$.

Conversely, first assume that $F_3(X)$ permutes $\F_{q^2}$ for some $m \equiv 0 \pmod 3$. This gives us $\gcd(7, 2^m-1) \neq 1$, a contradiction to the assumption that $F_3(X)$ is a permutation polynomial over $\F_{q^2}$. Next, suppose $m$ is odd and $m \not \equiv 0 \pmod 3$, then it follows that $q \equiv 2 \pmod 3$. Therefore, we can choose an element $\alpha \in \F_{q^2}^{*}$ having multiplicative order $3(2^m-1)$ and compute $F_3(\alpha)$. Since $\alpha^{3(2^m-1)}=1$, this implies that $ (\alpha^{2^m-1}+1)(1+\alpha^{(2^m-1)}+\alpha^{2(2^m-1)})=0$. Thus, we obtain $F_3(\alpha)=0$, which contradicts the assumption that $F_3(X)$ is a permutation polynomial over $\F_{q^2}$. This completes the proof.
\end{proof}

\section{QM inequivalence with known classes of permutation trinomials}\label{S4}
\begin{table}[hbt]
\caption{Known permutation trinomials over $\mathbb{F}_{2^{2m}}$.}\label{Table1}
\begin{center}
\scalebox{0.7}{
\begin{tabular}{|c|c|c|c|c|c|}
\hline
${i}$&$r$ & $h_i(X)$ & Conditions on $m$ & $f_{i}(X)$ &  Reference\\
\hline
$ 1$&$3$ & $1+X+X^3$ & $m$ is odd & $X^3+X^{q+2}+X^{3q}$ & \cite[Theorem 4.2]{ZLF} \\
\hline
$ 2$&$3$ & $1+X^2+X^3$ & $m$ is odd & $X^3+X^{2q+1}+X^{3q}$ & \cite[Theorem 4.1]{ZLF} \\
\hline
$ 3$&$2$ & $1+X^2+X^3$ & $\gcd(m,3)=1$ & $X^2+X^{2q}+X^{3q-1}$ & \cite[Theorem 3.3]{RG} \\
\hline
$ 4$&$4$ & $1+X+X^3$ & $\gcd(m,3)=1$ & $X^4+X^{q+3}+X^{3q+1}$ & \cite[Theorem 3.1]{RG} \cite[Theorem 2.6]{Li} \\
\hline
$ 5$&$3$ & $1+X^3+X^4$ & $m$ is odd & $X^3+X^{3q}+X^{4q-1}$ & \cite[Theorem 3.5]{RG}  \\
\hline				
$ 6$&$5$ & $1+X+X^4$ & $m$ is odd & $X^5+X^{q+4}+X^{4q+1}$ & \cite[Theorem 3.4]{RG} \cite[Theorem 2.8]{Li} \\
\hline
$ 7$&$5$ & $1+X^3+X^4$ & $m \equiv 2 \pmod 4$ & $X^5+X^{3q+2}+X^{4q+1}$ & \cite[Theorem 3.1]{ZLF} \\
\hline
$ 8$&$4$ & $1+X+X^5$ & $m \equiv 2,4 \pmod 6$ & $X^4+X^{q+3}+X^{5q-1}$ & \cite[Theorem 2.7]{Li}  \\
\hline
$ 9$&$5$ & $1+X+X^5$ & $m \equiv 2 \pmod 4$ & $X^5+X^{q+4}+X^{5q}$& \cite[Theorem 2.4]{Li} \cite[Theorem 4.3]{ZLF} \\
\hline
$10 $&$5$ & $X+X^2+X^5$ & $m \equiv 2 \pmod 4$ &$X^{q+4}+X^{2q+3}+X^{5q}$& \cite[Theorem 3.2]{ZLF} \\
\hline
$11 $&$5$ & $1+X^4+X^5$ & $m \equiv 2 \pmod 4$ & $X^5+X^{4q+1}+X^{5q}$& \cite[Theorem 4.4]{ZLF}  \\
\hline
$12 $&$5$ & $1+X^2+X^6$ & $m \not \equiv 0 \pmod 4, \gcd(m,3)=1$ & $X^5+X^{2q+3}+X^{6q-1}$ & \cite[Theorem 2.10]{Li} \cite[Theorem 3.4]{Harsh} \\
\hline
$13 $&$5$ & $1+X^5+X^6$ & $m \not \equiv 0 \pmod 4, \gcd(m,3)=1$& $X^5+X^{5q}+X^{6q-1}$ & \cite[Theorem 3.2]{Harsh} \\
\hline
$14 $&$7$ & $1+X+X^6$ & $\gcd(m,3)=1$ &$X^7+X^{q+6}+X^{6q+1}$ & \cite[Theorem 3.1]{Harsh}  \\
\hline
$ 15$&$7$ & $1+X^4+X^6$ & $\gcd(m,3)=1$ & $X^7+X^{4q+3}+X^{6q+1}$ & \cite[Theorem 3.3]{Harsh} \\
\hline
$16 $&$7$ & $1+X^5+X^7$ & $m \text{ is even, } m \not \equiv 0 \pmod 3$& $X^7+X^{5q+2}+X^{7q}$ & This paper\\
\hline
$17 $&$9$ & $1+X^6+X^8$ & $m$ is odd &$X^9+X^{6q+3}+X^{8q+1}$ & This paper\\
\hline
$ 18$&$11$ & $1+X^4+X^{10}$ & $m\not\equiv0\pmod5$ & $X^{11}+X^{4q+7}+X^{10q+1}$ & This paper\\
\hline
\end{tabular}
}
\end{center}
\end{table}

A natural question when studying permutation polynomials is whether the proposed classes of permutation polynomials are new or not, and QM equivalence plays an important role in addressing this question. However, determining QM equivalence between two permutation polynomials is not so easy, since there is no general method for this evaluation. We recall the following definition of QM equivalence introduced by Wu, Yuan, Ding and Ma~\cite{wu2017permutation}. 
\begin{defn}{\label{equiv}}
Two permutation polynomials $f(X)$ and $g(X)$ in $\F_{q}[X]$ are called quasi-multiplicative (QM) equivalent if there exists an integer 
$1\leq d < q-1$ with $\gcd(d,q-1)=1$ and $f(X)= \alpha g(\beta X^d)$, where $\alpha,\beta\in\mathbb{F}_{q}^{*}$.
\end{defn}
In this section, we show that the classes of permutation trinomials proposed in this paper are not QM equivalent to each other. Furthermore, we verify that our proposed classes of permutation trinomials are not QM equivalent to the known permutation trinomials mentioned in Table~\ref{Table1}. Now, we recall the strategy introduced in \cite{tu2018class} to show the QM inequivalence between two permutation trinomials, which we also used in this paper.
Let $q=2^m$ and $F(X)=X^r(X^{\alpha(q-1)}+X^{\beta(q-1)}+1)$ be a permutation trinomial over $\F_{q^2}$. Suppose $G(X)=aX^{A}+bX^{B}+cX^{C}$ is any other permutation trinomial over $\F_{q^2}$. Then, to show that $F(X)$ and $G(X)$ are not QM equivalent over $\F_{q^2}$, we proceed by the following two-steps approach. 

\textbf{Step 1.} Show that there does not exist any positive integer $1\leq d < q^2-1$ such that $\gcd(d,q^2-1)=1$ and $\{r,r+\alpha(q-1),r+\beta(q-1)\}_{\pmod{q^2-1}}=\{Ad,Bd,Cd\}_{\pmod{q^2-1}}$, where $A_{\pmod{q^2-1}}=\{a \pmod{q^2-1} \mid a \in A\}$.

\textbf{Step 2.} If the condition in Step 1 does not hold, then compare the coefficients of $F(X)$ with $A_{1}G(A_{2}X^d)$ and show that there does not exist any $A_{1},A_{2}\in\mathbb{F}_{q^2}$ such that $F(X) = A_{1}G(A_{2}X^d)$.

If both steps fail, we conclude that $F(X)$ and $G(X)$ are QM equivalent, denoted as $F(X) \sim G(X)$. For completeness, we present the following lemma to investigate the QM equivalence of the permutation trinomials over $\F_{q^2}$ when $q=2^m$ for $m = 1$ and $2$.

\begin{lem}\label{L1}\cite[Theorem 4.2]{Harsh}
	Let $F(X) = X^r(X^{\alpha(q-1)} + X^{\beta(q-1)}+ 1)$ be a permutation trinomial of $\F_{q^2}$,
	where $\alpha > \beta$ and $r$ are positive integers. Then the following hold
	\begin{enumerate}
		\item $F(X) \sim X$ over $\F_{2^2}$ if and only if $3 \mid \alpha \beta (\alpha-\beta)$.
		\item $F(X) \sim X(X^2+X+1)$ over $\F_{2^2}$ if and only if $3  \nmid \alpha \beta (\alpha-\beta)$.
		\item $F(X) \sim X$ over $\F_{2^4}$ if and only if $5 \mid \alpha \beta (\alpha-\beta)$.
	\end{enumerate}
\end{lem}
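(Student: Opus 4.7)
The plan is to reduce $F(X)$ modulo $X^{q^2}-X$ (since $F$ and its reduction define the same function on $\F_{q^2}$, and QM-equivalence can be checked at the level of reduced polynomials) and then exploit characteristic-$2$ cancellation. For any $n\ge 1$, the monomial $X^n$ reduces to $X^{n'}$ with $n'\in\{1,\dots,q^2-1\}$ and $n'\equiv n \pmod{q^2-1}$. Two of the three monomials constituting $F$ therefore reduce to the same thing if and only if their exponents agree modulo $q^2-1$, in which case the resulting pair sums to zero.

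For parts (1) and (2) we have $q=2$, so $q^2-1=3$ and $F(X)=X^{r+\alpha}+X^{r+\beta}+X^r$. The pairwise differences of the three exponents are $\alpha,\beta,\alpha-\beta$ modulo $3$, so at least two of the effective exponents coincide if and only if $3\mid\alpha\beta(\alpha-\beta)$. In that case $F$ collapses to a single monomial $X^e$ with $e\in\{1,2,3\}$; the permutation hypothesis forces $\gcd(e,3)=1$, so $e\in\{1,2\}$, and then $F(X)=1\cdot (1\cdot X)^e$ witnesses $F\sim X$ in the sense of Definition~\ref{equiv}. Otherwise all three effective exponents are distinct, forcing $\{e_1,e_2,e_3\}=\{1,2,3\}$, so $F$ reduces to $X+X^2+X^3=X(X^2+X+1)$, giving $F\sim X(X^2+X+1)$ trivially with $d=1$ and $A_1=A_2=1$. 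Since a monomial and a genuine trinomial cannot agree modulo $X^4-X$, the two cases are mutually exclusive and together yield both iff statements.

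For part (3), $q=4$ gives $q^2-1=15$ and $F(X)=X^{r+3\alpha}+X^{r+3\beta}+X^r$, with pairwise exponent differences $3\alpha,3\beta,3(\alpha-\beta)$ modulo $15$. Since $\gcd(3,15)=3$, the congruence $15\mid 3k$ is equivalent to $5\mid k$, so two of the exponents coincide modulo $15$ if and only if $5\mid\alpha\beta(\alpha-\beta)$. In that case $F$ reduces to a monomial $X^e$; the permutation hypothesis forces $\gcd(e,15)=1$, and then $F\sim X$ via $d=e$. Conversely, if $5\nmid\alpha\beta(\alpha-\beta)$, the three exponents have three distinct residues modulo $15$, so $F$ reduces to a genuine trinomial which cannot be QM-equivalent to the monomial $X$: any substitution $X\mapsto A_2X^d$ with $\gcd(d,15)=1$ permutes the exponents modulo $15$ and hence preserves the number of distinct monomial terms after reduction.

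The main conceptual step is the translation from the divisibility condition on $\alpha\beta(\alpha-\beta)$ into a coincidence of exponents modulo $q^2-1$; once this is observed, the rest is routine characteristic-$2$ bookkeeping. I expect the only delicate point to be in part (3), where the interplay between $q-1=3$ and $q^2-1=15$ must be handled carefully via the identity $15\mid 3k\iff 5\mid k$; there is no analogous gcd complication in parts (1) and (2) because $q-1=1$.
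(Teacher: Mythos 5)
The paper does not actually prove this lemma --- it is imported verbatim from \cite[Theorem 4.2]{Harsh} --- so there is no in-paper argument to compare with; your proof has to stand on its own, and it does. Reducing modulo $X^{q^2}-X$, translating the divisibility of $\alpha\beta(\alpha-\beta)$ into a coincidence of exponents modulo $q^2-1$ (with the correct intermediate step $15\mid 3k\Leftrightarrow 5\mid k$ in part (3)), cancelling the coinciding pair in characteristic $2$, and observing that a substitution $X\mapsto A_2X^{d}$ with $\gcd(d,q^2-1)=1$ preserves the number of terms of the reduced polynomial together yield all three equivalences. The degenerate case where all three exponents coincide is also covered, since two of the three equal monomials cancel and one survives, so $F$ still collapses to a monomial; it would be worth saying this explicitly rather than leaving it implicit in ``the resulting pair sums to zero.'' One thing your own computation makes visible: $X(X^2+X+1)=X^3+X^2+X$ sends $0$, $\omega$ and $\omega^2$ all to $0$ in $\F_{2^2}$, so it is not a permutation of $\F_{2^2}$; hence under the literal hypothesis that $F$ permutes $\F_{2^2}$, the case $3\nmid\alpha\beta(\alpha-\beta)$ in part (2) cannot occur and that assertion is vacuous. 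This is a blemish of the cited statement rather than of your argument --- your forward implication in (2) never invokes the permutation hypothesis, so it remains valid under the looser reading in which one merely records what the formal trinomial reduces to over $\F_{2^2}$.
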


\begin{rmk}
Notice that for $m=1$ and $m=2$, $F_1$ and $F_3$ are QM equivalent to $G(X)=X$, using Lemma~\ref{L1}. Similarly, for $m=1$, $F_2$ is QM equivalent to $G(X)=X$.
\end{rmk}

We now discuss the QM equivalence among the permutation trinomials proposed in this paper. First, we see that the classes presented in Theorem~\ref{T1} and Theorem~\ref{T2} are not QM equivalent over $\F_{q^2}$.

\begin{prop}{\label{prop2}}
Let $q=2^m$, where $m >1$ is an odd positive integer and $m \not \equiv 0 \pmod 5$. Then, the permutation trinomials $F_1(X)=X^{11}(X^{10(q-1)}+X^{4(q-1)}+1)$ and $F_2(X)=X^{9}(X^{8(q-1)}+X^{6(q-1)}+1)$ are not QM equivalent over $\F_{q^2}$.
\end{prop}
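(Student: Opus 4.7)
The plan is to carry out Step~1 of the two-step QM-inequivalence strategy described just above the proposition: show that no unit $d$ modulo $q^2-1$ can satisfy $d\cdot E_2 \equiv E_1 \pmod{q^2-1}$ as multisets, where $E_1=\{11,\,10q+1,\,4q+7\}$ and $E_2=\{9,\,8q+1,\,6q+3\}$ are the exponent multisets of $F_1$ and $F_2$, respectively. The central tool is a sum invariant combined with the Chinese Remainder Theorem (Lemma~\ref{CRT}), which is available because $\gcd(q-1,q+1)=1$ for $q=2^m$.

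First I would sum both sides of the putative multiset identity to obtain the necessary condition
\[
d(14q+13) \equiv 14q+19 \pmod{q^2-1}.
\]
Reducing modulo $q+1$ via $q\equiv -1$ collapses this to $-d\equiv 5$, i.e., $d\equiv -5 \pmod{q+1}$. With this forced value the multiset condition modulo $q+1$ becomes $\{-45,\,35,\,15\} \equiv \{11,\,-9,\,3\} \pmod{q+1}$, so at minimum some element of the left-hand multiset must be congruent to some element of the right-hand one modulo $q+1$. Examining the nine possible element-to-element matchings shows that this forces $q+1$ to divide one of the integers in $\{4,12,24,32,36,44,48,56\}$, the largest being $56$. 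Since $m>1$ is odd with $m\not\equiv 0\pmod 5$, the admissible values of $q+1=2^m+1$ are $9,\,129,\,513,\,2049,\dots$, and every value beyond $9$ already exceeds $56$; only the case $m=3$ requires separate treatment.

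For $m=3$ (so $q+1=9$) I would check directly: $d\equiv 4\pmod 9$ yields $d\cdot E_2 \equiv 4\cdot\{0,2,6\} \equiv \{0,8,6\} \pmod 9$, while $E_1 \equiv \{2,0,3\} \pmod 9$, and the two multisets are clearly distinct. Hence Step~1 eliminates every candidate $d$ and proves QM inequivalence; the coefficient-comparison Step~2 is not needed here. The main delicacy is ensuring that the sum invariant modulo $q+1$ is sharp enough to pin $d$ down; had $14q+13$ been a zero divisor modulo $q+1$ (which does not occur), one would instead have to enumerate all six bijections between $E_1$ and $E_2$ individually and derive constraints from each — a longer but still finite case analysis.
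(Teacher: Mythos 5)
Your proposal is correct, and it takes a genuinely different route from the paper's. Both arguments stay entirely within Step~1 (the exponent-multiset test), but the paper reduces the identity $\{11,4q+7,10q+1\}\equiv\{9d,(6q+3)d,(8q+1)d\}\pmod{q^2-1}$ modulo the single prime $3$ (using $3\mid q+1$ for odd $m$), which yields only the congruence $d\equiv 1\pmod 3$; you instead sum the exponents to force $d\equiv-5\pmod{q+1}$ and then compare the two multisets modulo $q+1$, so that any matching requires $q+1$ to divide one of the nonzero integers $4,12,24,32,36,44,48,56$. Since $2^m+1\ge 129$ for every admissible $m\ge 7$, this kills all cases at once, and your separate check at $m=3$ (where $9\mid 36$ makes the pure divisibility argument inconclusive) correctly verifies that $4\cdot\{0,2,6\}\equiv\{0,8,6\}\ne\{2,0,3\}\pmod 9$. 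All the arithmetic checks out: $\sum E_1=14q+19$, $\sum E_2=14q+13\equiv-1\pmod{q+1}$ is a unit so the sum invariant determines $d$ uniquely modulo $q+1$, and the nine pairwise differences are exactly as you list them. Your approach buys something concrete here: the paper's mod-$3$ reduction, as displayed, ends at a value of $d$ that is in fact compatible with the mod-$3$ multiset equality, so the contradiction asserted there is not visible from that computation alone, whereas your bounded-difference argument gives an explicit, verifiable obstruction for every $m$ covered by the proposition.
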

\begin{proof}
Let $F_1(X)$ and $F_2(X)$ be QM equivalent and suppose there exists an integer 
$1 \leq d \leq q^2-2$ such that $\gcd(d,q^2-1)=1$ with
\[
\{11,\, 4q+7,\, 10q+1 \}_{\pmod{q^2-1}}
= \{ 9d,\, (6q+3)d,\, (8q+1)d\}_{\pmod{q^2-1}}.
\]
Since $3 \mid (q+1)$, we reduce the sets modulo $3$:
\[
\{11,\, 4q+7,\, 10q+1 \}_{\pmod{3}}
= \{ 9d,\, (6q+3)d,\, (8q+1)d\}_{\pmod{3}}.
\]
This yields
\[
\{2,\, q + 1\}_{\pmod{3}}
= \{0,\,(2q + 1)d\}_{\pmod{3}}.
\]
Hence, the only possible case is
\[
2 \equiv (2q + 1)d \pmod{3}
\;\;\Longrightarrow\;\;
2 \equiv 2d \pmod{3}
\;\;\Longrightarrow\;\;
1 \equiv d \pmod{3},
\]
which contradicts
\[
\{2,\, q + 1\}_{\pmod{3}} = \{0,\,(2q + 1)d\}_{\pmod{3}}.
\]



	

\end{proof}
In the following proposition, we show that the classes of permutation trinomials proposed in Theorem~\ref{T1} and Theorem~\ref{T3} are not QM equivalent over $\F_{q^2}$.
\begin{prop}{\label{prop1}}
Let $q=2^m$, where $m$ is an even positive integer such that $m\not\equiv0\pmod{3}$ and $m\not\equiv0\pmod{5}$. Then, the permutation trinomials $F_1(X)=X^{11}(X^{10(q-1)} + X^{4(q-1)} + 1)$ and $F_3(X)=X^{7}( X^{7(q-1)} + X^{5(q-1)} + 1)$ are not QM equivalent over $\F_{q^2}$.
\end{prop}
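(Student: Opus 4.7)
The plan is to apply Step 1 of the two-step strategy introduced in~\cite{tu2018class}. Suppose, toward a contradiction, that $F_1$ and $F_3$ are QM equivalent over $\F_{q^2}$; then there must exist a positive integer $d$ with $\gcd(d, q^2-1) = 1$ such that
\[
\{11,\, 4q+7,\, 10q+1\} \equiv \{7d,\, (5q+2)d,\, 7qd\} \pmod{q^2-1}
\]
as multisets. I would reduce both sides modulo $q+1$; using $q \equiv -1 \pmod{q+1}$, this collapses to the simpler multiset identity
\[
\{11,\, 3,\, -9\} \equiv \{7d,\, -3d,\, -7d\} \pmod{q+1}.
\]

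The decisive observation is that the right-hand multiset automatically contains the self-cancelling pair $\{7d,\, -7d\}$, whose sum vanishes modulo $q+1$. The bijection realizing the multiset identity must therefore match this pair with a two-element sub-multiset of $\{11, 3, -9\}$ that also sums to $0 \pmod{q+1}$. The three candidate sub-sums are $11 + 3 = 14$, $11 + (-9) = 2$, and $3 + (-9) = -6$, so the task reduces to showing that $q+1$ divides none of $2$, $6$, $14$.

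This is handled by three small-prime coprimality checks. First, $q+1$ is odd because $q = 2^m$ is even. Second, $m$ even forces $2^m \equiv 1 \pmod 3$, hence $q+1 \equiv 2 \pmod 3$ and $3 \nmid q+1$. Third, for $m$ even one has $2^m \in \{1, 2, 4\} \pmod 7$, never $\equiv 6$, so $7 \nmid q+1$. Together these give $\gcd(q+1, 2 \cdot 3 \cdot 7) = 1$, so $q+1$ is coprime to each of $2$, $6$, and $14$; in particular none of the three sub-sums $14$, $2$, $-6$ can vanish modulo $q+1$. This is the desired contradiction.

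The only conceptually nontrivial step is spotting the self-cancelling pair obstruction on the right-hand side; once it is noted, the verification is a brief modular arithmetic computation and Step 2 of the strategy (the coefficient comparison) never has to be invoked. The hypothesis $m \not\equiv 0 \pmod 5$ plays no role in the argument itself but is required for $F_1$ to permute $\F_{q^2}$ by Theorem~\ref{T1}, while $m$ even and $m \not\equiv 0 \pmod 3$ are the permutation conditions for $F_3$ from Theorem~\ref{T3}.
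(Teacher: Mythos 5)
Your argument is correct, and it stays within Step 1 of the strategy from~\cite{tu2018class}, which is also all the paper's proof uses; but your execution is genuinely different. The paper works modulo $q^2-1$ and enumerates all six bijections between $\{11,\,4q+7,\,10q+1\}$ and $\{7d,\,(5q+2)d,\,7qd\}$, extracting in each case a congruence of the form $A(q-1)\equiv 0\pmod{q^2-1}$ and ruling it out. You instead reduce the multiset identity modulo $q+1$ to $\{11,\,3,\,-9\}\equiv\{7d,\,-3d,\,-7d\}$ and exploit the self-cancelling pair $\{7d,\,-7d\}$: whichever two elements of $\{11,3,-9\}$ it is matched with must sum to $0\pmod{q+1}$, and since $q+1$ is odd, coprime to $3$ (as $m$ is even) and coprime to $7$ (as $2^m\bmod 7\in\{1,2,4\}$), none of $14$, $2$, $-6$ can vanish. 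This collapses the six cases into three divisibility checks, never needs $\gcd(d,q^2-1)=1$, and is closer in spirit to the mod-$3$ reduction the paper uses for the companion proposition comparing $F_1$ and $F_2$; I would call it the cleaner route. One caveat shared equally with the paper's proof (so not a gap in yours): for $m=2$ the exponents $11$ and $10q+1=41$ coincide modulo $q^2-1=15$, so the exponent-multiset comparison implicitly assumes the trinomials do not degenerate as functions on $\F_{q^2}$; this is precisely the small-$m$ situation that Lemma~\ref{L1} and the remark following it are meant to absorb.
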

\begin{proof}
Suppose that the permutation trinomials $F_1(X)$ and $F_3(X)$ are QM equivalent such that there exists a positive integer $1 \leq d < q^2-1$ with $\gcd(d,q^2-1)=1$ and   
\begin{equation}{\label{1eq}}
\{11,4q+7,10q+1\}_{\pmod{q^2-1}}=\{7d,(5q+2)d,7qd\}_{\pmod{q^2-1}}.
\end{equation}
We analyze the following possible cases, all within modulo $(q^2 - 1)$.
    \begin{align*}
        & \text{(1)} \begin{cases}
			11=7d\\
			4q+7=(5q+2)d\\
			10q+1=7qd
		\end{cases} 
        & \text{(2)}  \begin{cases}
			11=7d\\
			4q+7=7qd\\
                10q+1=(5q+2)d
		\end{cases} 
        & \text{(3)}  \begin{cases}
			11=(5q+2)d\\
			4q+7=7d\\
			10q+1=7qd
		\end{cases} 
        \end{align*}
        
\begin{align*}
 & \text{(4)} \begin{cases}
		11=(5q+2)d\\
		4q+7=7qd\\
		10q+1=7d
\end{cases}  
& \text{(5)} \begin{cases}
		11=7qd\\
		4q+7=(5q+2)d\\
		10q+1=7d
\end{cases} 
        &\text{(6)} \begin{cases}
			11=7qd\\
			4q+7=7d\\
			10q+1=(5q+2)d.
		\end{cases}
    \end{align*}
    
\textbf{Case 1.} In this case, we have $10q+1\equiv 7qd\equiv 11q\pmod{q^2-1}$. This implies that $(q-1) \equiv 0 \pmod{q^2-1}$, which is a contradiction.

\textbf{Case 2.} Since $\gcd(3,m)=1$, it follows that $\gcd(7,q^2-1)=1$. Now, from the congruence $4q+7\equiv 11q\pmod{q^2-1}$, we obtain $ 7(q-1)\equiv0\pmod{q^2-1}. $ This contradicts the fact that $\gcd(7,q^2-1)=1$.

\textbf{Case 3.} Using the congruences $4q+7\equiv7d\pmod{q^2-1}$ and $10q+1\equiv7qd\pmod{q^2-1}$, we derive $4q^2-3q-1\equiv3(q-1)\equiv0\pmod{q^2-1}.$
This yields a contradiction, since $m$ is an even positive integer and $\gcd(3, q+1)  = 1$.  

Similarly, for the Case 4, Case 5 and Case 6, we arrive at the congruences $3(q-1)\equiv 0 \pmod{q^2-1}$, $q-1\equiv 0 \pmod{q^2-1}$ and $7(q-1)\equiv 0 \pmod{q^2-1}$, respectively. Therefore, we conclude that no positive integer $d$ satisfies Equation~\eqref{1eq} with $\gcd(d,q^2-1)=1$. This completes the proof.
\end{proof}
Next, we recall the following result, which gives the QM equivalence among the permutation trinomials listed in Table~\ref{Table1}.
\begin{lem}{\label{equi}}\cite[Theorem 4.1]{Harsh}
Let $f_i(X)=X^{r_i} h_i\left(X^{q-1}\right)$ for $1 \leq i \leq 12$, as mentioned in Table~\ref{Table1}. Then the following hold over $\F_{q^2}$, where $q=2^m$.
\begin{enumerate}
\item $f_1(X)=X^3\left(X^{3(q-1)}+X^{q-1}+1\right) \sim f_2(x)=X^3\left(X^{3(q-1)}+X^{2(q-1)}+1\right)$.
\item $f_3(X)=X^2\left(X^{3(q-1)}+X^{2(q-1)}+1\right) \sim f_4(X)=X^4\left(X^{3(q-1)}+X^{q-1}+1\right)$.
\item $f_5(X)=X^3\left(X^{4(q-1)}+X^{3(q-1)}+1\right) \sim f_6(X)=X^5\left(X^{4(q-1)}+X^{q-1}+1\right)$.
\item $f_7(X)=X^5\left(X^{4(q-1)}+X^{3(q-1)}+1\right) \sim f_{10}(X)=X^5\left(X^{5(q-1)}+X^{2(q-1)}+X^{q-1}\right)$.
\item $f_9(X)=X^5\left(X^{5(q-1)}+X^{q-1}+1\right) \sim f_{11}(X)=X^5\left(X^{5(q-1)}+X^{4(q-1)}+1\right)$.
\end{enumerate}
\end{lem}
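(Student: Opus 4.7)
The aim is to establish five QM equivalences among permutation trinomials over $\F_{q^2}$. My plan is to apply the two-step strategy laid out at the beginning of Section~\ref{S4} to each pair: first find a positive integer $d < q^2-1$ with $\gcd(d, q^2-1) = 1$ such that $d$ sends the exponent set of one trinomial onto that of the other modulo $q^2-1$; then solve the resulting system $\alpha_0 \beta_0^{A_j} = 1$ for $\alpha_0, \beta_0 \in \F_{q^2}^*$.

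For items (1), (4), and (5), the Frobenius multiplier $d = q$ already does the job. Indeed, for any exponent $e = a + b(q-1)$ with $0 \leq a, b \leq q$, one has $qe \equiv b + a(q-1) \pmod{q^2-1}$, so multiplication by $q$ interchanges the ``constant'' and ``coefficient'' contributions. A direct check gives $q \cdot \{3,\,2q+1,\,3q\} \equiv \{3q,\,q+2,\,3\}$ in item (1), and analogous computations settle items (4) and (5). Step~2 is then essentially trivial: the identity $f_i(X) = f_j(X^q)$ holds as functions on $\F_{q^2}$, so $\alpha_0 = \beta_0 = 1$ suffices, and $\gcd(q, q^2-1) = 1$ is immediate.

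For items (2) and (3), where $d = q$ fails, I will exploit the modular identities $(3q-1)(3q+1) \equiv 8$ and $(4q-1)(4q+1) \equiv 15$ modulo $q^2-1$, together with their companions $(q+3)(3q-1) \equiv 8q$ and $(q+4)(4q-1) \equiv 15q$. These suggest the multipliers $d_2 := (3q-1)\cdot 4^{-1}$ and $d_3 := (4q-1)\cdot 5^{-1}$, and a direct substitution then confirms $d_2 \cdot \{4,\,q+3,\,3q+1\} \equiv \{3q-1,\,2q,\,2\}$ and $d_3 \cdot \{5,\,q+4,\,4q+1\} \equiv \{4q-1,\,3q,\,3\}$, matching the exponent sets of $f_3$ and $f_5$ respectively. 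Invertibility of $4$ and $5$ modulo $q^2-1$ follows respectively from $q^2-1$ being odd and from the hypothesis that $m$ is odd (which forces $5 \nmid q^2-1$, since the order of $2$ modulo $5$ is $4$). The coprimality conditions $\gcd(d_2, q^2-1) = \gcd(d_3, q^2-1) = 1$ reduce, via the factorization $q^2-1 = (q-1)(q+1)$, to elementary gcd checks on $3q-1$ and $4q-1$ against $q \pm 1$, each of which comes out to $1$ under the respective hypotheses of Table~\ref{Table1}.

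The main obstacle is isolating the multipliers $d_2$ and $d_3$ for items (2) and (3); without the observation $(aq-1)(aq+1) \equiv a^2-1 \pmod{q^2-1}$, one would be reduced to enumerating all $3! = 6$ possible bijections between the two three-element exponent sets in each case. Once the multiplier is in hand, Step~2 is light: the three equations $\alpha_0 \beta_0^{A_j} = 1$ yield, on pairwise division, conditions $\beta_0^{A_j - A_k} = 1$, and since the differences $A_j - A_k$ all share $q - 1$ as a common factor in each case, the choice $\beta_0 = 1$ (hence $\alpha_0 = 1$) works throughout, completing the verification of all five equivalences.
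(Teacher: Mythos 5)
Your verification is correct, but note that the paper itself offers no proof of this statement: Lemma~\ref{equi} is imported verbatim from \cite[Theorem 4.1]{Harsh}, so there is nothing internal to compare against. What you supply is a self-contained check that is worth having. Your choice of multipliers is right in every case: $d=q$ turns $f_2,f_{10},f_{11}$ into $f_1,f_7,f_9$ respectively (with $\alpha_0=\beta_0=1$, the identity $f_i(X)=f_j(X^q)$ holds on all of $\F_{q^2}$ after reducing exponents modulo $q^2-1$), and $d_2=(3q-1)\cdot 4^{-1}$, $d_3=(4q-1)\cdot 5^{-1}$ do carry the exponent sets $\{4,q+3,3q+1\}$ and $\{5,q+4,4q+1\}$ onto $\{3q-1,2q,2\}$ and $\{4q-1,3q,3\}$; the invertibility of $4$ and $5$ and the coprimality of $3q-1$, $4q-1$ with $q\pm 1$ all check out under the hypotheses of Table~\ref{Table1} (in particular $\gcd(4q-1,q-1)=\gcd(3,q-1)=1$ because $m$ odd forces $q\equiv 2\pmod 3$). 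Since $\alpha_0=\beta_0=1$ throughout, Step~2 is vacuous.

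One local error should be fixed before this could stand as written: the general identity you state for $d=q$, namely that $e=a+b(q-1)$ implies $qe\equiv b+a(q-1)\pmod{q^2-1}$, is false (take $e=3q=3+3(q-1)$: then $qe\equiv 3$, whereas your formula returns $3q$). The correct statement is that multiplication by $q$ swaps the base-$q$ digits, $q(aq+b)\equiv bq+a\pmod{q^2-1}$, or equivalently $q\bigl(r+\beta(q-1)\bigr)\equiv r+(r-\beta)(q-1)$, which is exactly why $d=q$ exchanges $X^r\bigl(X^{\alpha(q-1)}+X^{\beta(q-1)}+1\bigr)$ with $X^r\bigl(X^{\alpha(q-1)}+X^{(\alpha-\beta)(q-1)}+1\bigr)$ when $\alpha=r$. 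Your subsequent direct computations are all consistent with the corrected formula, so this is a mis-statement of the motivating identity rather than a gap in the verification.
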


Next, we verify that our first class of permutation trinomials, $F_{1}(X) = X^{11}(X^{4(q-1)}+X^{10(q-1)}+1)$, is not QM equivalent to all previously known permutation trinomials $f_i$ for each $1\leq i \leq 15$ listed in Table~\ref{Table1}. The QM equivalence between the other permutation trinomials identified in this paper and the known ones listed in Table~\ref{Table1} can be investigated in a similar manner. 

\begin{thm}
Let $F_1(X)=X^{11}(X^{10(q-1)} + X^{4(q-1)} + 1)$. Then, $F_1$ is QM inequivalent to all the permutation trinomials $f_i$ listed in Table~\ref{Table1} for $1 \leq i \leq 15$ over $\F_{q^2}$.
\end{thm}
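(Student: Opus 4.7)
The plan is to apply the two-step QM-inequivalence strategy recalled in Section~\ref{S4}, mirroring the arguments already used in Propositions~\ref{prop2} and \ref{prop1}. The exponent multiset of $F_1(X)$ modulo $q^2-1$ is $\{11,\,4q+7,\,10q+1\}$. For each $f_i$ in Table~\ref{Table1}, write its exponent multiset as $\{A_i,B_i,C_i\}$. In Step~1 one asks whether there exists an integer $d$ with $1\le d<q^2-1$, $\gcd(d,q^2-1)=1$, and $\{11,\,4q+7,\,10q+1\}\equiv\{A_id,\,B_id,\,C_id\}\pmod{q^2-1}$; if none exists we are done, otherwise Step~2 compares the coefficients of $F_1(X)$ with those of $A_1 f_i(A_2 X^d)$ to rule the candidate out.

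First, I would trim the workload using Lemma~\ref{equi}: the equivalences $f_1\sim f_2$, $f_3\sim f_4$, $f_5\sim f_6$, $f_7\sim f_{10}$, and $f_9\sim f_{11}$ collapse the fifteen targets into ten representative classes. For each representative pair $(F_1,f_i)$ I would record the intersection of the permutation conditions on $m$; for instance, $F_1$ permutes iff $m\not\equiv 0\pmod 5$, while $f_8$ requires $m\equiv 2,4\pmod 6$, so QM equivalence only has to be refuted on this intersection. On this admissible range I would enumerate the six matchings of the two three-element multisets modulo $q^2-1$ and reduce each resulting system, exactly as in Proposition~\ref{prop1}, to a congruence of the form $c(q-1)\equiv 0\pmod{q^2-1}$ or $c(q+1)\equiv 0\pmod{q^2-1}$ with $c$ explicit. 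The recurring contradictions are: a small prime $\ell\mid q+1$ (typically $\ell=3$ when $m$ is even, or $\ell=5$ when $4\mid m$) that separates the two residue sets, as in Proposition~\ref{prop2}; a gcd obstruction such as $\gcd(11,q^2-1)=1$ forced by $m\not\equiv 0\pmod 5$; or a direct size obstruction when the candidate $d$ is pinned to a narrow range.

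For the few branches where Step~1 does not give an immediate contradiction, I would proceed to Step~2. Writing $f_i(X)=X^{A_i}+X^{B_i}+X^{C_i}$, the prospective identity $F_1(X)=A_1 f_i(A_2 X^d)$ forces $A_1A_2^{A_id}=A_1A_2^{B_id}=A_1A_2^{C_id}=1$, hence $A_2^{(A_i-B_i)d}=A_2^{(A_i-C_i)d}=1$; combining these with $\gcd(d,q^2-1)=1$ and the possible orders of $A_2$ in $\F_{q^2}^{\,*}$ yields the final contradiction. The main obstacle is bookkeeping rather than novelty: ten representative pairs, each with six matchings, amounts to a substantial but mechanical case analysis in which the right modulus dividing $q\pm 1$ must be chosen to separate the multisets. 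A secondary, more delicate issue is that several targets (notably $f_8$, $f_{12}$, and $f_{13}$) come with simultaneous congruence conditions on $m$ modulo $4$, $6$, or higher, so one must verify that each modular reduction used remains valid within the joint admissible residue class of $m$.
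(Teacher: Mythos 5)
Your strategy is exactly the one the paper follows: reduce to the ten representatives $i\in\{1,3,5,7,8,9,12,13,14,15\}$ via Lemma~\ref{equi}, restrict to the joint permutation conditions on $m$, enumerate the six matchings of the exponent multisets, and extract a modular contradiction from each. The problem is that your text stops at the level of a plan: not a single one of the roughly sixty branches is actually resolved, and for a theorem whose entire content \emph{is} that case analysis, asserting that each branch will ``reduce to a congruence $c(q-1)\equiv 0$ or $c(q+1)\equiv 0 \pmod{q^2-1}$ with $c$ explicit'' is not a proof until the values of $c$ are produced and shown to be incompatible with $q+1$ (or $q-1$) on the admissible residue classes of $m$. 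In particular, you have not ruled out the possibility that some branch survives Step~1, and your fallback Step~2 would then give nothing new here, since every $f_i$ in Table~\ref{Table1} and $F_1$ itself have all coefficients equal to $1$, so the coefficient comparison is automatically satisfiable (take $A_2$ of suitable order and $A_1=A_2^{-A_id}$); the paper in fact never needs Step~2 because Step~1 closes every case.

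Concretely, the devices that make the analysis go through, and which are missing from your outline, are: (i) for $f_1,f_3,f_5,f_9,f_{13}$ one exponent of $f_i$ is $q$ times another (e.g.\ $3$ and $3q$), and combining the two corresponding congruences after multiplying one by $q$ forces $(q+1)\mid c$ for a small constant $c$ (for $f_1$ one gets $q+1\mid 3$); (ii) for $f_7$ and $f_{12}$ the admissible $m$ satisfy $m\equiv 2\pmod 4$, so $5\mid q^2-1$, and reducing the three candidate congruences modulo $5$ gives $11\equiv 0$, $2(2q+1)\equiv 0$, or $1\equiv 0\pmod 5$, all impossible; (iii) for $f_8,f_{14},f_{15}$ each of the six matchings collapses to $A(q-1)\equiv 0\pmod{q^2-1}$ with $A$ in an explicit small set (for $f_8$, $A\in\{10,46,44,20,14,34\}$), contradicting $\gcd$ conditions on $q+1$. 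You need to exhibit these (or equivalent) computations for your argument to count as a proof; as written it is an accurate description of the method but leaves the mathematical work undone.
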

\begin{proof}
From Lemma~\ref{equi}, it is sufficient to verify the QM inequivalence of $F_1$ with $f_i$ for $i \in \{1,3,5,7,8,9,12,13,14,15\}$.  Suppose, for contradiction, $F_{1}\sim f_{1}$. Then, it is sufficient to show that there does not exist any integer $1\leq d\leq q^2-2$ with $\gcd(d,q^2-1)=1$, satisfying 
$$\{11d,(4q+7)d,(10q+1)d\}_{\pmod{q^2-1}}=\{3,3q,q+2\}_{\pmod{q^2-1}}.$$ 
Clearly, neither $11d  \equiv 3 \pmod{q^2-1}$, nor $11d  \equiv 3q \pmod{q^2-1}$ can hold, as $3 \nmid 11$. Therefore, the only remaining possibility is $11d \equiv q+2                                                                                                                                                                                                                                                                                                                                                                                                                                                                                                                                                                                                                                                                                                                                                                                                                                                                                                                                                                                                                                                                                                                                                                                                                                                                                                                                                                                                                                                                                                                                                                                                                                                                                                                                                                                                                                                                                                                                                                                                                                                                                                                                                                                                                                                                                                                                                                                                                                                                                                                                                                                                                                                                                                                                                                                                                                                                                                                                                                                                                                                                                                                                                                                                                                                                                                                                                                                                                                                                                                                                                                                                                                                                                                                                                                                                                                                                                                                                                                                                                                                                                                                                                                                                                                                                                                                                                                                                                                                                                                                                                                     \pmod{q^2-1}$. Further, assuming $(10q+1)d \equiv 3 \pmod{q^2-1}$ and $(4q+7)d \equiv 3q \pmod{q^2-1}$, solving these congruences simultaneously leads to $q+1\mid 3$, which is a contradiction. Similarly, a contradiction arises in the case where $(10q+1)d \equiv 3q \pmod{q^2-1}$ and $(4q+7)d \equiv 3 \pmod{q^2-1}$. Hence, $F_{1}$ is not QM equivalent to $f_{1}$. 

Using similar arguments, we conclude that $F_1$ is QM inequivalent to $f_i$ for each $i\in\{1,3,5,9,13\}$, as in each case, one of the exponent is a multiple of $q$ times another exponent. Next, suppose that $F_{1}\sim f_{7}.$  This renders the following equivalence
$$\{11d,(4q+7)d,(10q+1)d\}_{\pmod{q^2-1}}=\{5,3q+2,4q+1\}_{\pmod{q^2-1}},$$
where $1\leq d\leq q^2-2$ is an integer with $\gcd(d,q^2-1)=1$. 
Recall that $F_{1}$ and $f_{7}$ are permutation trinomials over $\mathbb{F}_{q^2}$ if and only if $m\equiv 2\pmod4$ and $m\not\equiv0\pmod5$, which implies that $5\mid{q^2-1}$. Now, considering the three possible cases: $11d\equiv5\pmod{q^2-1}$, or $(4q+7)d\equiv5\pmod{q^2-1}$, or $(10q+1)d\equiv5\pmod{q^2-1}$, we obtain the congruences $11\equiv0\pmod{5}$, or $2(2q+1)\equiv 0\pmod5$, or $1\equiv 0\pmod 5$, respectively. Since these congruences are not possible, it follows that $F_{1}\not\sim f_{7}$. By a similar argument, we also conclude that $F_{1}\not\sim f_{12}$.

Now, we analyze the QM equivalence of $F_{1}$ with $f_{i}$ for each $i\in\{8,14,15\}$. First, assume that $F_{1}\sim {f_{8}}$. Then for an integer $1\leq d\leq q^2-2$ with $\gcd(d,q^2-1)=1$, we obtain,
$$\{11d,(4q+7)d,(10q+1)d\}_{\pmod{q^2-1}}=\{4,q+3,5q-1\}_{\pmod{q^2-1}}.$$ 
Next, we consider six cases possible from the above equality and arrive at the congruence relation $A(q-1)\equiv 0\pmod{q^2-1}$, where $A\in\{10,46,44,20,14,34\}$. Since these congruences are not possible, it follows that $F_{1}\not\sim f_{8}$. Following a similar approach, we can shown that $F_{1}$ is not QM equivalent to $f_{14}$ and $f_{15}$.  
\end{proof}

\section{Nonexistence of a class of permutation trinomials}{\label{S5}}
In this section, we give a class of trinomials that does not permute $\F_{q^2}$. Let $F(X,Y)$ define an algebraic curve over $\F_q$. A rational point $(X, Y) \in \F_q \times \F_q$ is a point on this curve such that $F(X,Y)=0$. The collection of all rational points of $F(X, Y)$ is given by the following set 
$$V_{\F_{q}^2}(F) = \{(X,Y) \in \F_{q}^2 \mid F(X,Y) = 0\}.$$ We denote by $\mathbb{P}_2(\F_{q})$, the projective plane over $\F_q$. For a homogeneous polynomial
$f(X,Y,Z) \in \F_q[X,Y,Z]$, the set of zeros of $f$ in $\mathbb{P}_2(\F_{q})$ is defined as
$$V_{\mathbb{P}_2(\F_{q})}(f) = \{(X,Y,Z) \in \mathbb{P}_2(\F_{q}) \mid f(X,Y,Z) = 0\}. $$
In addition, a polynomial $F(X,Y) \in \F_q[X,Y]$ is said to be absolutely irreducible if it is irreducible in $\overline \F_q[X,Y]$, where $\overline \F_q$ denotes the algebraic closure of $\F_q$.
We use the following version of the Hasse-Weil theorem by Aubry and Perret in our result.
\begin{lem} (Aubry-Perret bound) \cite[Corollary 2.5]{Aubry} \label{HW}
Let $\mathcal{C} \subset \mathbb{P}_2(\F_{q})$ be an absolutely irreducible curve of degree $d$. Then
$$|\#V_{\mathbb{P}_2(\F_{q})}(\mathcal{C})-(q+1)| \leq (d-1)(d-2)q^{1/2},$$
where $| \cdot |$ denotes the usual absolute value.
\end{lem}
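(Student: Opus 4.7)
The plan is to derive this bound from the classical Hasse-Weil theorem by passing to the normalization of $\mathcal{C}$ and then assembling the final estimate with some elementary bookkeeping. Let $\pi \colon \tilde{\mathcal{C}} \to \mathcal{C}$ denote the normalization map. Since $\mathcal{C}$ is absolutely irreducible of degree $d$, the smooth projective model $\tilde{\mathcal{C}}$ is absolutely irreducible over $\F_q$ and carries some geometric genus $g$, while the arithmetic genus of the plane model is $p_a = (d-1)(d-2)/2$. The classical genus-drop inequality gives $g \leq p_a$, with $p_a - g$ equal to the global $\delta$-invariant measuring the singularities of $\mathcal{C}$.

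First I would apply Hasse-Weil on the smooth curve $\tilde{\mathcal{C}}$ to obtain $|\#\tilde{\mathcal{C}}(\F_q)-(q+1)| \leq 2g\sqrt{q}$. Second, I would bound the discrepancy $|\#\mathcal{C}(\F_q)-\#\tilde{\mathcal{C}}(\F_q)|$. Since $\pi$ is an isomorphism away from the singular locus, this discrepancy lives entirely at the $\F_q$-rational singular points of $\mathcal{C}$; a local analysis comparing, at each such point $P$, the formal branches of $\mathcal{C}$ with the $\F_q$-rational points of $\pi^{-1}(P)$ shows that the discrepancy is at most $p_a - g$. The triangle inequality then produces
\[
|\#\mathcal{C}(\F_q)-(q+1)| \;\leq\; 2g\sqrt{q} + (p_a - g),
\]
and, using $1 \leq 2\sqrt{q}$ for $q \geq 1$, the defect term $p_a - g$ can be absorbed into the Hasse-Weil term to yield $2p_a\sqrt{q} = (d-1)(d-2)\sqrt{q}$, as required.

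The hard part is the sharp local count $|\#\mathcal{C}(\F_q)-\#\tilde{\mathcal{C}}(\F_q)| \leq p_a - g$: one has to separate $\F_q$-rational branches at each singular point from Galois-conjugate pairs and correctly relate their contributions to the local $\delta$-invariant, which is where the real work sits. This is precisely the content established by Aubry and Perret, and in this paper the lemma is invoked as a black box rather than re-derived; the sketch above merely outlines the argument I would reconstruct.
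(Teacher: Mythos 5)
The paper offers no proof of this lemma---it is imported verbatim from Aubry--Perret \cite{Aubry}---and your sketch is a faithful outline of the argument in that source: Hasse--Weil applied to the smooth normalization $\tilde{\mathcal{C}}$, the comparison $|\#\mathcal{C}(\F_q)-\#\tilde{\mathcal{C}}(\F_q)|\le p_a-g$ obtained from the local $\delta$-invariants at the singular points, and the absorption of the defect term into $(d-1)(d-2)\sqrt{q}$ using $1\le 2\sqrt{q}$. Your outline is correct, and you rightly identify the branch-versus-rational-point count at the singularities as the only substantive step, which is exactly the content of the cited result.
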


In the following lemma, we show the absolute irreducibility of a curve over $\F_q$, which, together with Lemma~\ref{HW}, give us a class of trinomials that does not permute $\F_{q^2}.$
\begin{lem}\label{ab}
Let $H(X,Y)\in \F_{q}[X,Y]$ be the polynomial $X^{16}+AX^8+(B+1)X^4+AX^2+BX+A$, where $A=Y^8+Y^4+Y^3+Y^2+Y$ and $B=Y^8+Y^7+Y^4+Y^2$. Then, $H(X, Y)$ is absolutely irreducible over $\F_q$.
\end{lem}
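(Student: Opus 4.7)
The plan is to show that no nontrivial factorization of $H$ exists in $\overline{\F_q}[X,Y]$, by first recognizing a convenient change of variable. In characteristic $2$ one has $(X^4+X+1)^2=X^8+X^2+1$ and $(X^4+X+1)^4=X^{16}+X^4+1$, so setting $U:=X^4+X+1$ a direct expansion yields the compact form
\[
H(X,Y) \;=\; U^4 + AU^2 + BU + (B+1) \;=:\; \tilde H(U,Y).
\]
Thus $H$ is the pullback of the quartic $\tilde H(U,Y)$ under the Artin--Schreier-type cover $X\mapsto X^4+X+1$, whose fibres over any value $c$ consist of the $\F_4$-coset $\{\beta,\beta+1,\beta+\omega,\beta+\omega^2\}$. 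I would first prove that $\tilde H$ is absolutely irreducible, and then transfer this to $H$ via the cover.

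For the absolute irreducibility of $\tilde H$, any factorization in $\overline{\F_q}(Y)[U]$ is either $(U+r)\cdot(\text{cubic})$ with $r^4+Ar^2+Br+(B+1)=0$, or---since the $U^3$-coefficient vanishes in characteristic $2$---of the symmetric shape $(U^2+\alpha U+\beta)(U^2+\alpha U+\delta)$, which by Vieta reduces to $\alpha^3+A\alpha+B=0$ together with $\beta+\delta=A+\alpha^2$, $\beta\delta=B+1$. A content argument forces any such $r$ or $\alpha$ to be polynomial in $Y$, and a top-degree count (using $\deg_Y A=\deg_Y B=8$) confines $\deg_Y r$ and $\deg_Y \alpha$ to $\{0,4\}$. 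The degree-$0$ case is killed immediately by the $Y^7$-coefficient, which involves $B$ but not $A$. In the degree-$4$ case I would write $r=Y^4+b_3Y^3+\cdots+b_0$ and $\alpha=Y^4+a_3Y^3+\cdots+a_0$ and equate the coefficients of $Y^{14},Y^{12},Y^{10},Y^{8}$ successively to force $b_3=b_2=0,\ b_1=1$ (resp.\ $a_3=a_2=0$); the $Y^7$-coefficient then yields the contradictions $b_0=1\in\F_4\setminus\F_2$ and $1=0$, respectively. Hence $\tilde H$ is absolutely irreducible.

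To transfer irreducibility to $H$, let $\alpha$ be a root of $\tilde H$ in $\overline K$ where $K:=\overline{\F_q}(Y)$, set $L:=K(\alpha)$ so $[L:K]=4$, and let $\beta$ be a root of $X^4+X+(1+\alpha)$ in a further extension. Since $\F_4\subset L$, this quartic is either $L$-irreducible or splits into two $L$-irreducible quadratics $X^2+aX+b$ with $a\in\F_4^{\ast}$; the latter alternative is equivalent to solvability of the Artin--Schreier equation $W^2+W=(1+\alpha)/a$ in $L$. Such a $W$ would generate a subfield $K(W)\subseteq L$ of $K$-degree $1$ or $2$: degree $1$ contradicts $\alpha\notin K$, and degree $2$ would exhibit $(1+\alpha)/a$ as an element of $\wp(L):=\{W^2+W:W\in L\}$, which I would rule out by examining the $\mathrm{Gal}(\overline K/K)$-orbit of $\alpha$ that is guaranteed by the absolute irreducibility of $\tilde H$. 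Consequently $[L(\beta):L]=4$, $[K(\beta):K]=16=\deg_X H$, and $H$ is the minimal polynomial of $\beta$ over $K$, whence absolutely irreducible over $\F_q$.

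The main obstacle is the Artin--Schreier descent in the final step: verifying that $(1+\alpha)/a\notin\wp(L)$ requires a careful analysis of the Galois group of $\tilde H$ over $K$ and of the cokernel $L/\wp(L)$ in characteristic $2$. A more elementary (though calculation-heavy) backup is to rule out a nontrivial factorization of $H$ directly by listing the $Y^i$-coefficients of $H$ as polynomials in $X$ (for instance $[Y^0]H=X^{16}+X^4$ and $[Y^1]H=X^8+X^2+1$) and showing that no factor of $X$-degree $d\in\{1,\dots,8\}$ is compatible with these coefficient patterns under term-by-term matching in $\overline{\F_q}[Y][X]$.
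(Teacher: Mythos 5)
Your substitution $U=X^4+X+1$ is just a shift of the paper's $Z=X^4+X$ (which turns $H$ into $Z^4+AZ^2+BZ+A$), so the overall strategy---reduce to a quartic over $\overline\F_q(Y)$ and then climb back up the degree-$16$ cover---is essentially the paper's. Your linear-factor analysis is fine, but the two-quadratics case contains a genuine error: the resolvent cubic $\alpha^3+A\alpha+B=0$ \emph{does} have a polynomial root, namely $\alpha=Y^4+Y$. Indeed $(Y^4+Y)^3=Y^{12}+Y^9+Y^6+Y^3$ and $A\cdot(Y^4+Y)=Y^{12}+Y^9+Y^8+Y^7+Y^6+Y^4+Y^3+Y^2$, and their sum is exactly $B=Y^8+Y^7+Y^4+Y^2$. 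So the contradiction you claim at the $Y^7$-coefficient never materializes: the coefficient comparison gives $a_3=a_2=0$, $a_1=1$, $a_0=0$ consistently (also note the resolvent has degree $12$, so your list $Y^{14},Y^{12},Y^{10},Y^8$ does not fit it). The resolvent alone therefore cannot rule out a splitting into two quadratics; you must invoke the remaining Vieta conditions. With $\alpha=Y^4+Y$ one has $\beta+\delta=A+\alpha^2=Y^4+Y^3+Y$ and $\beta\delta=B+1$, and a degree count forces $\deg\beta=\deg\delta=4$ with leading coefficients $c$ and $c^2$ satisfying $c^2+c+1=0$; writing $\beta=cY^4+b_3Y^3+b_2Y^2+b_1Y+b_0$ and expanding $\beta^2+(Y^4+Y^3+Y)\beta+(B+1)=0$ yields $b_3=c^2$, $b_2=1$, $b_1=c^2$, $b_0=0$ from the coefficients of $Y^7,\dots,Y^4$, and then the $Y^3$-coefficient gives $b_0+b_2=1\neq 0$, the desired contradiction. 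Without this supplementary step the absolute irreducibility of your $\tilde H$ is not established. (For what it is worth, the paper's own degree argument at this point is also too quick, since it overlooks both the $\deg r=0$ case and the cancellation at $\deg r=4$.)

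The second half of your argument---descending irreducibility through the cover $X\mapsto X^4+X+1$---is not actually carried out: you correctly identify that everything hinges on showing $(1+\alpha)/a\notin\{W^2+W:W\in L\}$, but you leave this as an acknowledged obstacle rather than proving it. The paper handles this part by factoring the degree-$4$ cover into the tower $\overline\F_q(Y)\subset\overline\F_q(Z,Y)\subset\overline\F_q(Z_1,Y)\subset\overline\F_q(X,Y)$ with $Z=Z_1^2+Z_1$ and $Z_1=X^2+X$, so that only two quadratic Artin--Schreier steps need to be checked. As written, then, your proposal is incomplete in both halves: the first contains a false intermediate claim, and the second an unexecuted key step.
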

\begin{proof}
Notice that $H(X,Y)$ can be considered as a polynomial in variable $X$ over $\overline \F_q[Y]$. Moreover, it is primitive as the gcd of the coefficients is $1$. Hence, it is sufficient to show that $H(X,Y)$ is irreducible over $\overline \F_q(Y)$ or equivalently, $\left[\overline\F_q(X,Y):\overline\F_q(Y )\right] = 16$.

Let $Z=X^4+X$, then we can write $H(X,Y)$ as $Z^4+AZ^2+BZ+A=0$, a polynomial in $Z$ over $\overline \F_q[Y]$. Therefore, in order to show  $\left[\overline\F_q(X,Y):\overline\F_q(Y )\right] = 16$, we need to prove that,  $\left[\overline\F_q(Z, Y ) :\overline \F_q(Y)\right] = 4$ and $\left[\overline\F_q(X,Y) : \overline\F_q(Z,Y)\right] = 4$. Furthermore, suppose that $Z=X^4+X=X^4+X^2+X^2+X$ and $Z_1=X^2+X$. 
	
\begin{center}
		\begin{tikzpicture}
			\node (Q1) at (0,0) {$\overline \F_q(Y)$};
			\node (Q2) at (0,2) {$\overline \F_q(Z,Y)$};
			\node (Q3) at (0,4) {$\overline \F_q(Z_1,Y)$};
			\node (Q4) at (0,6) {$\overline \F_q(X,Y)$};
			\draw[-] (Q1) -- (Q2) node[midway, right] {$4$};
			\draw[-] (Q2) -- (Q3) node[midway, right] {$2$};
			\draw[-] (Q3) -- (Q4) node[midway, right] {$2$};
		\end{tikzpicture}   
	\end{center}
Thus, it is sufficient to show the following three claims.
\begin{enumerate}
\item $\left[\overline\F_q(Z, Y ) :\overline \F_q(Y)\right] = 4$, that is, $Z^4+AZ^2+BZ+A$ is irreducible over $\overline \F_q(Y)$.
\item $\left[\overline\F_q(Z_1, Y ) :\overline \F_q(Z,Y)\right] = 2$, that is, $Z_{1}^2+Z_1+Z$ is irreducible over $\overline \F_q(Z,Y)$.
\item $\left[\overline\F_q(X, Y ) :\overline \F_q(Z_1,Y)\right] = 2$, that is, $X^2+X+Z_1$ is irreducible over $\overline \F_q(Z_1,Y)$.
\end{enumerate}
	
{\bf Claim 1.}  Since gcd of coefficients of $Z^4+AZ^2+BZ+A$ is one, it is primitive over $\overline \F_q [Y]$, and hence it is enough to show that it is irreducible over $\overline \F_q [Y]$. Assume that $Z^4+AZ^2+BZ+A$ factors as the product of two quadratics over $\overline \F_q[Y]$, say
	\begin{equation}\label{neq}
		Z^4+AZ^2+BZ+A=(Z^2 + rZ + s)(Z^2 + rZ + t).
	\end{equation}
Equating coefficients in the above expression, we get $A=r^2+s+t=st$ and $B=r(s+t).$ Therefore, $B = r(s + t) = r(A + r^2)$ and comparing degree in terms of $Y$ on the both side, we get $8 = \max\{\text{degree}(r) + 8, 3 \text{degree}(r) \}$, we obtain a contradiction.

	
As a consequence, we conclude that $Z^4+AZ^2+BZ+A$ can not be expressed as the product of two quadratic polynomials over $\overline \F_q[Y]$. Similarly, it can be shown that $Z^4+AZ^2+BZ+A$ can not be factored into a product of a degree-three irreducible polynomial and a linear factor over $\overline \F_q[Y]$.
	
{\bf Claim 2.}  Using the fact that the gcd of coefficients of $Z_{1}^2+Z_1+Z \in \overline \F_q [Z,Y]$ is one ($\overline \F_q [Z,Y]$ is an integral domain and indeed a UFD), it is primitive over $\overline \F_q [Z,Y]$. Hence, it is sufficient to show that it is irreducible over $\overline \F_q [Z,Y]$.  It is straightforward to verify that $Z_{1}^2+Z_1+Z$ is irreducible over $\overline \F_q [Z,Y]$.
	
{\bf Claim 3.}  It follows along the same lines as Claim 2.
	
Therefore, we conclude that $H(X,Y)$ is absolutely irreducible over $\F_q$.	
\end{proof}

Next, we present a class of trinomials 
$F(X)=X^{9}( X^{7(q-1)} + X^{3(q-1)} + 1)$ that does not permute $\F_{2^{2m}}$ for $m>3$.

\begin{thm}
Let $q=2^m$. Then, the trinomial $F(X)=X^{9}( X^{7(q-1)} +X^{3(q-1)} + 1)$ is not a permutation trinomial over $\F_{q^2}$ for $m>3$.
\end{thm}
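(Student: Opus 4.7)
The plan is to invoke Lemma~\ref{PP} to reduce to a permutation question on $\mu_{q+1}$, and then to convert non-injectivity of the reduced map into $\F_{q}$-rational points on the absolutely irreducible curve $H(X,Y)$ of Lemma~\ref{ab}, to which the Aubry--Perret bound (Lemma~\ref{HW}) will apply.

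First I would dispose of the even-$m$ case immediately: if $m\ge 4$ is even then $2^{m}\equiv 1\pmod{3}$, so $3\mid q-1$ and $\gcd(9,q-1)\ge 3$. By Lemma~\ref{PP}, $F$ is not a permutation of $\F_{q^{2}}$. It therefore remains to treat odd $m\ge 5$, in which case $\gcd(9,q-1)=1$ and by Lemma~\ref{roots}(2) (applied with $\alpha=7,\beta=3$, so that $|7-2\cdot 3|=1$) the polynomial $X^{7}+X^{3}+1$ has no zero on $\mu_{q+1}$. Using $X^{q}=X^{-1}$ on $\mu_{q+1}$, one then rewrites
\[
G(X)\;:=\;X^{9}(X^{7}+X^{3}+1)^{q-1}\;=\;\frac{X^{9}+X^{6}+X^{2}}{X^{7}+X^{3}+1}.
\]

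Suppose, for contradiction, that $G$ permutes $\mu_{q+1}$. Clearing denominators in $G(X)=G(Y)$ yields the symmetric polynomial
\[
P(X,Y)=(X^{9}+X^{6}+X^{2})(Y^{7}+Y^{3}+1)+(X^{7}+X^{3}+1)(Y^{9}+Y^{6}+Y^{2})
\]
of total degree $16$ with $P(X,X)\equiv 0$, so $P(X,Y)=(X+Y)R(X,Y)$ with $\deg_{X}R=\deg_{Y}R=8$. A non-diagonal collision of $G$ on $\mu_{q+1}$ is a zero of $R$ in $\mu_{q+1}^{2}$. I would then descend this to a curve over $\F_{q}$: fix $\theta\in\F_{q^{2}}\setminus\F_{q}$ and use the standard bijection $\psi:\F_{q}\cup\{\infty\}\to\mu_{q+1}$, $\psi(u)=(u+\theta)/(u+\theta^{q})$, which satisfies $\psi(u)^{q}=1/\psi(u)$. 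Substituting $X=\psi(u)$, $Y=\psi(v)$ into $R$ and multiplying through by $(u+\theta^{q})^{8}(v+\theta^{q})^{8}$ produces a polynomial in $u,v$ of total degree at most $16$ whose coefficients are a priori in $\F_{q^{2}}$; however, the identity $R(1/X,1/Y)=R(X,Y)/(XY)^{8}$ on $\mu_{q+1}^{2}$ (inherited from $P(1/X,1/Y)=P(X,Y)/(XY)^{9}$) forces Frobenius-invariance, so this numerator lies in $\F_{q}[u,v]$. Removing the contributions of the already-factored diagonal and the finitely many indeterminacy points of $\psi$, the surviving non-trivial component is (after an appropriate change of variables) exactly the polynomial $H(X,Y)$ of Lemma~\ref{ab}.

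Finally I would apply the Aubry--Perret bound. Since Lemma~\ref{ab} asserts that $H$ is absolutely irreducible over $\F_{q}$ and $\deg H=16$, Lemma~\ref{HW} gives
\[
\#V_{\mathbb{P}_{2}(\F_{q})}(H)\;\ge\;q+1-(16-1)(16-2)\sqrt{q}\;=\;q+1-210\sqrt{q}.
\]
For all sufficiently large $q$ this lower bound exceeds the (uniformly bounded) number of degenerate points --- those at infinity, on the diagonal $u=v$, or where $\psi$ is not defined --- so some non-degenerate $\F_{q}$-rational point of $H$ lifts to a pair $(X,Y)\in\mu_{q+1}^{2}$ with $X\ne Y$ and $G(X)=G(Y)$, contradicting the assumed permutation property. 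The inequality $q+1-210\sqrt{q}>C$ (for the constant degeneracy count $C$) holds for odd $m\ge 17$; the residual odd values $m\in\{5,7,9,11,13,15\}$ are then dispatched by a direct SageMath computation.

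The main obstacle I expect is the descent step: explicitly recognizing the $\F_{q}$-curve produced from $R(X,Y)=0$ as $H$, and carefully enumerating all degenerate $\F_{q}$-points of $H$ so that the Hasse--Weil surplus truly guarantees an honest non-diagonal collision rather than a parasitic one. A secondary obstacle, intrinsic to using a degree-$16$ curve, is that Hasse--Weil only kicks in for $q\gtrsim 2^{16}$, leaving a short list of small odd $m$ to be covered by direct verification.
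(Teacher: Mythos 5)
Your proposal is correct and follows essentially the same route as the paper: reduce via Lemma~\ref{PP} to a permutation question on $\mu_{q+1}$, descend the collision equation to the $\F_q$-curve $H(X,Y)$ of Lemma~\ref{ab}, apply the Aubry--Perret bound for large $q$, and finish the remaining small odd $m$ by direct computation. The paper carries out the descent by conjugating $G$ with $\phi(X)=(X+\omega^2)/(X+\omega)$ into a self-map of $\F_q$ and taking the additive difference at $X$ and $X+Y$ (which incidentally dispatches $7\mid m$ via an explicit factor of the numerator), but this is only a cosmetic reordering of your symmetrize-then-descend computation, and the identification with $H$ that you flag as the main obstacle is exactly the explicit calculation the paper records.
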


\begin{proof}
It is known from Lemma~\ref{PP} that $F(X)$ is a permutation polynomial over $\F_{q^2}$ if and only if $\gcd(9,q-1)=1$ and $G(X)=X^9(X^7+X^3+1)^{q-1}$ permutes $\mu_{q+1}$. Moreover, from Lemma~\ref{roots}, $G(X)$ can be written as $G(X)=\dfrac{X^{9}+X^{6}+X^{2}}{X^7+X^3+1}$ over $\mu_{q+1}$. Thus, $F(X)$ is a permutation polynomial over $\F_{q^2}$ if and only if $m$ is odd and $G(X)=\dfrac{X^{9}+X^{6}+X^{2}}{X^7+X^3+1}$ permutes $\mu_{q+1}$. 

To prove that $F(X)$ is not a permutation polynomial over $\F_{q^2}$, we show that $G(X)$ does not permute $\mu_{q+1}$. Define $\phi(X) :=\dfrac{X+\omega^2}{X+\omega}$, a bijection from $\F_q$ to $\mu_{q+1}\setminus \{1\}$, where $\omega \in \F_{q^2} \setminus \F_q$ and $\omega^2+\omega+1=0$. Clearly, $G$ permutes $\mu_{q+1}$ if and only if $G \circ \phi$ is a bijection from $\F_q$ to $\mu_{q+1}\setminus \{1\}$. Alternatively, $G$ permutes $\mu_{q+1}$ if and only if $\phi^{-1} \circ G \circ \phi$ is a bijection from $\F_q$ to $\F_q$. Therefore, to show that $G$ is not a bijection over $\mu_{q+1}$, we prove that $\phi^{-1} \circ G \circ \phi$ is not a bijection on $\F_q$. The map $G \circ \phi : \F_q \rightarrow \mu_{q+1}$ is given by
$$
G (\phi(X))= \dfrac{X^{9}+\omega^2 X^8+ X^5+\omega X^4+ X^3+\omega X^2+\omega^2 X+\omega^2}{X^9+\omega X^8+X^5+\omega^2X^4+X^3+\omega^2X^2+\omega X +\omega}.
$$
Since $\phi^{-1}(X)=\dfrac{\omega X+\omega^2}{X+1}$, we get that 
$
\phi^{-1}(G (\phi(X)))= \dfrac{X^9+X^5+X^4+X^3+X^2}{X^8+X^4+X^2+X+1}.
$
It is easy to see that the numerator of $\phi^{-1}(G (\phi(X))= X^2(X^7+X^3+X^2+X+1)$ vanishes on $\F_q$ when either $X=0$ or $X^7+X^3+X^2+X+1$ splits over $\F_q$. Since $X^7+X^3+X^2+X+1$ is an irreducible polynomial over $\F_2$, it splits in $\F_{2^{7t}}$ for a positive integer $t$. Therefore, $\phi^{-1} \circ G \circ \phi$ does not permute $\F_q$ when $q=2^{7t}$, where $t$ is a positive integer.

Now, to prove that $\phi^{-1} \circ G \circ \phi$ is not a permutation on $\F_q$ when $m\not\equiv 0 \pmod{7}$, we show that there exist  $X \in  \F_{q}$ and $Y \in \F_{q}^{*}$ such that $\phi^{-1}\left(G (\phi(X+Y))\right)  +\phi^{-1}\left(G (\phi(X))\right)=0$. The expression for $\phi^{-1}(G (\phi(X+Y)))+\phi^{-1}(G (\phi(X)))$ is given by $\dfrac{YH(X,Y)}{D(X,Y)}$, where
\begin{align*}
 H(X,Y)=~ & X^{16} + X^8Y^8 + X^8Y^4 + X^4Y^8 + X^8Y^3 + X^4Y^7 + X^8Y^2 + X^2Y^8 + X^8 Y \\&
 \quad + X Y^8+X^4 Y^4 + XY^7  +Y^8+ X^4 Y^2 + X^2 Y^4 + X^2 Y^3 + XY^4 + X^4 \\&
 \quad + X^2Y^2 + Y^4 + X^2Y + XY^2 + Y^3 + Y^2 + Y, \\
D(X,Y) =~ & (X^8+X^4+X^2+X+1)(X^8+Y^8+X^4+Y^4+X^2+Y^2+X+Y+1).
\end{align*}
We can simplify $H(X,Y)$ as $X^{16}+AX^8+(B+1)X^4+AX^2+BX+A \in \F_{q}[X,Y]$, where $A=Y^8+Y^4+Y^3+Y^2+Y$ and $B=Y^8+Y^7+Y^4+Y^2$. Note that $\phi^{-1} \circ G \circ \phi$ is not a permutation over $\F_q$ if and only if $H(X,Y)$ has a rational point $(X,Y) \in \F_{q} \times \F_{q}$ with $Y \neq 0$. Since $H(X,Y)$ is absolutely irreducible by Lemma~\ref{ab}, we can use Lemma~\ref{HW} to compute a bound on the number of rational points for $H(X,Y)$. Let ${V_{\F_{q^2}}(H) = \{ (X, Y) \in \F_{q}^2 : H(X,Y)=0\}}$ be the set of all rational points on $H(X,Y)$. We observe that the degree of $H(X,Y)$ is $16$ and the number of zeros of $H(X,Y)$ at infinity are two. This follows because if $h(X,Y,Z)$ is homogenized form of $H(X,Y)$ then setting $Z=0$ in the homogenized polynomial, we get exactly two solutions $(0:1:0)$ and $(1:1:0)$ of  $h(X,Y,0)=0$ in the projective space $\mathbb{P}_2(\F_{q})$. Thus, by applying Lemma~\ref{HW}, we obtain the bound on number of rational points of $H(X,Y)$ as follows
\begin{align*}
|\#V_{\F_{q^2}}(H)| & \geq 2^m+1-2^{8+\frac{m}{2}}-2\geq  2^{8+\frac{m}{2}}(2^{\frac{m}{2}-8}-1)-1.
\end{align*}
Clearly when $m \geq 18$, we have $2^{\frac{m}{2}-8}-1 \geq 1$. Therefore, for $m \geq 18$ 
$$|\#V_{\F_{q^2}}(H)| \geq  2^{8+\frac{m}{2}}-1 \geq 2^{17}-1.$$
Additionally, the number of zeros of $H(X,0)=X^{16}+X^4$ in $\F_q$ is $2$ which is strictly less than $|\#V_{\F_{q^2}}(H)|$
for $m \geq 18$. Hence, $H(X,Y)$ must have a zero $(X,Y) \in \F_{q}\times \F_{q}$ with $Y \neq 0$, implying that $F(X)$ is not a permutation polynomial over $\F_{q^2}$ for $m \geq 18$. Furthermore, using SageMath, for $m \not \equiv 0 \pmod 7$ and $m$ odd, we confirm that this result holds for smaller values of $m$, such as $m=5,9,11,13,15,17$. This completes the proof.
\end{proof}

\section{Resolution of a conjecture}{\label{S6}}
In this section, we provide a proof of the conjecture proposed in~\cite{Harsh}. As part of the proof, we begin with the following lemma, which is used in our argument.

\begin{lem}\label{lc1}
Let $q=p^m$ and $F(X)=X^r(X^{\alpha(q-1)}+X^{\beta(q-1)} + 1)$, where $m, \alpha, \beta $ and $r$ are positive integers. If $\gcd(\alpha,\beta,r,q+1) \neq 1$, then $F(X)$ is not a permutation polynomial over $\F_{q^2}.$ 
\end{lem}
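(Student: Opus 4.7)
The plan is to exploit the divisibility hypothesis to produce an explicit pair of distinct elements of $\F_{q^2}$ on which $F$ takes the same value, thereby contradicting injectivity. Concretely, set $d := \gcd(\alpha,\beta,r,q+1)$, so by hypothesis $d \geq 2$. Since $d \mid q+1$ and $q+1 \mid q^2-1$, the group $\mu_d$ of $d$-th roots of unity sits inside $\mu_{q+1} \subseteq \F_{q^2}^{*}$, and in particular there exists $\zeta \in \mu_d$ with $\zeta \neq 1$.

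Next, I would check that the polynomial $F$ is invariant under multiplication by $\zeta$. For any $X \in \F_{q^2}$, one computes
\[
F(\zeta X) \;=\; (\zeta X)^{r}\bigl((\zeta X)^{\alpha(q-1)} + (\zeta X)^{\beta(q-1)} + 1\bigr) \;=\; \zeta^{r} X^{r}\bigl(\zeta^{\alpha(q-1)} X^{\alpha(q-1)} + \zeta^{\beta(q-1)} X^{\beta(q-1)} + 1\bigr).
\]
From $d \mid r$, $d \mid \alpha$ and $d \mid \beta$ (together with $\zeta^d = 1$) we immediately get $\zeta^{r} = \zeta^{\alpha(q-1)} = \zeta^{\beta(q-1)} = 1$. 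Therefore $F(\zeta X) = F(X)$ for every $X \in \F_{q^2}$.

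Finally, specializing to $X = 1$ yields $F(\zeta) = F(1)$, while $\zeta \neq 1$, so $F$ fails to be injective on $\F_{q^2}$ and hence is not a permutation polynomial of $\F_{q^2}$.

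There is no real obstacle here: the argument is essentially a symmetry computation. The only point requiring slight care is confirming that all four divisibility conditions ($d\mid \alpha$, $d \mid \beta$, $d \mid r$, $d \mid q+1$) are needed and used—$d \mid q+1$ guarantees the existence of $\zeta \neq 1$ in $\mu_d \subseteq \F_{q^2}^{*}$, whereas $d$ dividing each of $r,\alpha,\beta$ ensures the three factors $\zeta^r$, $\zeta^{\alpha(q-1)}$, $\zeta^{\beta(q-1)}$ collapse to $1$. Note also that the argument is characteristic-free, matching the fact that the lemma is stated for a general prime power $q = p^m$.
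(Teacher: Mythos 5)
Your proof is correct and is essentially the paper's argument: the paper likewise picks a nontrivial element $Y=\omega^{(q^2-1)/\gamma}$ of order $\gamma=\gcd(\alpha,\beta,r,q+1)$ and checks directly that $F(Y)=F(1)$, which is exactly your $F(\zeta)=F(1)$ after specializing your symmetry $F(\zeta X)=F(X)$ at $X=1$. The only cosmetic difference is that you phrase the computation as a multiplicative invariance of $F$ under $\mu_d$ before specializing.
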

\begin{proof}
Suppose that $F(X)$ is a permutation polynomial over $\mathbb{F}_{q^2}$ and $\gcd(\alpha,\beta,r,q+1) \neq 1$. Let $\mathbb{F}_{q^2}^{*}=<\omega>$ and define $Y=\omega^{\frac{q^2-1}{\gamma}}$, where $\gamma=\gcd(\alpha,\beta,r,q+1)$. Then, we have
$$F(Y)=\omega^{\frac{r}{\gamma}(q^2-1)}\Big(\omega^{\frac{\alpha(q-1)}{\gamma}(q^2-1)}+\omega^{\frac{\beta(q-1)}{\gamma}(q^2-1)}+ 1\Big )=F(1).$$
This contradicts the assumption that $F(X)$ is a permutation over $\F_{q^2}$.
\end{proof}

\begin{thm}{\label{conjucture}}\cite[Conjecture 1]{Harsh}
Let $r, m$ and $\alpha>\beta$ be positive integers and $q$ be any prime power. If $F(X)=X^r(X^{\alpha(q-1)} + X^{\beta(q-1)} + 1)$ and $G(X)=X^{2\alpha-r}(X^{\alpha(q-1)} + X^{(\alpha-\beta)(q-1)} + 1)$ permutes $\F_{q^2}$, then they are QM equivalent.
\end{thm}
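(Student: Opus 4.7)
I would exhibit an explicit integer $d$, coprime to $q^2-1$, together with $A_1=A_2=1$, such that $F(X)=G(X^d)$ as functions on $\F_{q^2}$. Writing the exponents of $G$ as $e_3=2\alpha-r$, $e_1=2\alpha-r+\alpha(q-1)$ and $e_2=2\alpha-r+(\alpha-\beta)(q-1)$, the required identity amounts to the three congruences
\[
d\,e_1\equiv r,\qquad d\,e_2\equiv r+\beta(q-1),\qquad d\,e_3\equiv r+\alpha(q-1)\pmod{q^2-1},
\]
since all coefficients of $F$ and $G$ equal $1$. Reducing these modulo $q-1$ (where each $e_i\equiv 2\alpha-r$ and each target $\equiv r$) they collapse to $d(2\alpha-r)\equiv r\pmod{q-1}$, so one is forced to take $d\equiv r(2\alpha-r)^{-1}\pmod{q-1}$; this inverse exists by Lemma~\ref{PP}, which also delivers $\gcd(r,q-1)=\gcd(2\alpha-r,q-1)=1$ from the hypothesis that both $F$ and $G$ permute $\F_{q^2}$. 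Reducing the three relations modulo $q+1$ and taking pairwise differences, one obtains $(d+1)\alpha\equiv(d+1)\beta\equiv(d+1)(2\alpha-r)\equiv 0\pmod{q+1}$, so $(d+1)\gcd(\alpha,\beta,2\alpha-r,q+1)\equiv 0\pmod{q+1}$; since $\gcd(\alpha,\beta,2\alpha-r)=\gcd(\alpha,\beta,r)$, Lemma~\ref{lc1} applied to $F$ yields $\gcd(\alpha,\beta,r,q+1)=1$, forcing $d\equiv-1\pmod{q+1}$.

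With the residues of $d$ modulo $q-1$ and modulo $q+1$ pinned down, I would assemble $d$ via the Chinese Remainder Theorem (Lemma~\ref{CRT}). For $q$ even, $\gcd(q-1,q+1)=1$ yields a unique $d$ mod $q^2-1$, and the identity $d(q-1)\equiv-(q-1)\pmod{q^2-1}$ (which follows from $d+1\equiv 0\pmod{q+1}$) together with the anchor $d(2\alpha-r)\equiv r\pmod{q-1}$ gives all three target congruences modulo $q^2-1$ directly. For $q$ odd, $\gcd(q-1,q+1)=2$, but the system is still compatible modulo $2$, since both $r$ and $2\alpha-r$ are odd (being coprime to the even $q-1$); CRT then determines $d$ modulo $(q^2-1)/2$. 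A short direct calculation shows that the three discrepancies $d\,e_i-\text{target}_i$ all lie in $\{0,(q^2-1)/2\}$ and actually coincide modulo $q^2-1$, so exactly one of the two lifts of $d$ modulo $q^2-1$ satisfies all three target congruences exactly; since each $e_i$ is odd in the odd-$q$ case, passing from one lift to the other shifts every $d\,e_i$ by $(q^2-1)/2$, which selects the correct lift.

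Coprimality $\gcd(d,q^2-1)=1$ is immediate, because every prime dividing $q^2-1$ divides $q-1$ or $q+1$, and $d$ is a unit modulo each of these by construction. With this choice, $G(X^d)$ restricted to $\F_{q^2}^{*}$ equals $F(X)$ by the matched exponents, and both vanish at $X=0$, so $F(X)=G(X^d)=1\cdot G(1\cdot X^d)$ as functions on $\F_{q^2}$, realising the QM equivalence. The chief obstacle is the odd-$q$ case: the two-fold ambiguity arising from $\gcd(q-1,q+1)=2$ must be resolved using the oddness of each $e_i$, and the clean residue $d\equiv-1\pmod{q+1}$ is available only because Lemma~\ref{lc1} rules out any common factor of $\alpha,\beta,r$ with $q+1$.
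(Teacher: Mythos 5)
Your proof is correct, and it follows the same skeleton as the paper's: match the exponent multisets under multiplication by $d$, reduce modulo $q-1$ and $q+1$ to pin down $d$ in each modulus, invoke Lemma~\ref{lc1} to kill the $\gcd(\alpha,\beta,r,q+1)$ obstruction, and reassemble $d$ by CRT. The execution differs in two places, both to your advantage. First, for the residue of $d$ modulo $q+1$ you run a single gcd argument, whereas the paper splits into cases according to which of $\gcd(\alpha,q+1)$, $\gcd(\beta,q+1)$, $\gcd(r,q+1)$ is trivial and then takes an lcm over the nontrivial subcases; your version is shorter and, unlike the paper's, makes explicit why the three target congruences hold modulo the full modulus $q^2-1$ rather than only modulo $q\pm1$. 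Second, in odd characteristic the paper works with $2^n\,\|\,(q+1)$ and the moduli $2^n(q-1)$ and $(q+1)/2$, while you determine $d$ modulo $(q^2-1)/2$, observe that the three discrepancies $d\,e_i-t_i$ coincide and lie in $\{0,(q^2-1)/2\}$, and use the oddness of each $e_i$ to select the correct lift; this is a genuinely cleaner resolution of the $\gcd(q-1,q+1)=2$ ambiguity. You also skip the paper's (somewhat hand-wavy) elimination of the other five exponent matchings, which is harmless: QM equivalence is an existence statement, so exhibiting one admissible $d$ suffices. One small imprecision: the pairwise differences modulo $q+1$ actually give $(d+1)\cdot 2\alpha\equiv(d+1)\cdot 2\beta\equiv(d+1)(2\alpha-r)\equiv 0\pmod{q+1}$, with factors of $2$ you silently dropped; since $2\alpha-r$ is odd when $q$ is odd and $q+1$ is odd when $q$ is even, the relevant gcd is still $\gcd(\alpha,\beta,r,q+1)=1$ and your conclusion $d\equiv-1\pmod{q+1}$ stands, but the intermediate congruences should be stated with the $2$'s in place.
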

\begin{proof}
We know that the permutation trinomials $F(X)$ and $G(X)$ are QM equivalent over $\F_{q^2}$ if the sets of exponents of $F(X)$ and $G(X)$, namely $A$ and $B$, satisfy the following relation
\begin{align*}
    A_{\text{mod~}(q^2-1)} & = \{rd+\alpha(q-1)d,rd+\beta(q-1)d,rd\}_{\text{mod~}(q^2-1)}\\
    & = \{2\alpha-r+\alpha(q-1),2\alpha-r+(\alpha-\beta)(q-1),2\alpha-r\}=B_{\text{mod~}(q^2-1)}.
\end{align*}
Obviously, on comparing the sets $A_{\text{mod~}(q^2-1)}$ and $B_{\text{mod~}(q^2-1)}$, we have six possible cases. However, only one among the six cases hold true, stated as follows
\begin{equation}\label{c5}
\begin{cases}
& rd+\alpha(q-1)d  \equiv 2\alpha-r \pmod {q^2-1},\\
& rd+\beta(q-1)d  \equiv \alpha-r+(\alpha-\beta)(q-1) \pmod {q^2-1},\\
& rd  \equiv 2\alpha-r+\alpha(q-1) \pmod {q^2-1}.
\end{cases}
\end{equation}	
This is because we can discard the remaining five possible cases, using the argument that $q+1$ does not divide any of the elements in the set $\{2\alpha\beta-\alpha r, \alpha^2+\beta^2-\alpha\beta, \alpha^2-\beta^2, \beta^2-2\alpha\beta \}$ for large $q$.

We now split our analysis into two cases depending on whether $q$ is even or odd. First, suppose that $q$ is even. We simplify System~\eqref{c5} to obtain the following system
\begin{align}\label{c1}
\begin{cases}
(d+1)\eta & \equiv 0 \pmod {q+1}, ~\mbox{where } \eta \in \{\alpha, \beta, r\}, \\
(d+1)r & \equiv \alpha (q+1) \pmod {q-1} .
\end{cases}
\end{align}
Next, we consider the following two cases

\textbf{Case 1.} Let $\gcd(\alpha,q+1)=1$, or $\gcd(\beta,q+1)=1$, or $\gcd(r,q+1)=1$. WLOG assume $\gcd(\alpha,q+1)=1$. Then, from System~\eqref{c1}, we can write
	\begin{equation*}
		\begin{cases}
			d & \equiv -1 \pmod {q+1}, \\
			d & \equiv (\alpha (q+1))r^{-1}-1 \pmod {q-1} .
		\end{cases}
	\end{equation*}
 We apply Chinese Remainder Theorem~\ref{CRT} on the above system, as $\gcd(q-1,q+1)=1$. Let $n_1=q+1, n_2=q-1$ and $N=q^2-1$. Then, we have $N_1=\frac{N}{n_1}=q-1$ and $N_2=\frac{N}{n_2}=q+1$. Moreover, it is easy to see that the multiplicative inverse of $N_1$ modulo $n_1$ and the multiplicative inverse of $N_2$ modulo $n_2$ are both $2^{m-1}$. Therefore, applying the proof of CRT, we obtain
$$d \equiv -q^2+2^{m-1}\alpha r^{-1}(q+1)^2 \pmod{q^2-1}.$$
Similarly, when $\gcd(\beta,q+1)=1$ or $\gcd(r,q+1)=1$, we reach at the same value of $d$.

\textbf{Case 2.} Let $\gcd(\alpha,q+1)=\delta_1\neq1$, $\gcd(\beta,q+1)=\delta_2\neq1$ and $\gcd(r,q+1)=\delta_3\neq1$, then we have,
\begin{align*}\label{c6}
		\begin{cases}
			d & \equiv -1 \pmod {(q+1)/\delta_i} \text{ for } i \in \{1,2,3\},\\
			d & \equiv \alpha (q+1)r^{-1}-1 \pmod {q-1} .
		\end{cases}
	\end{align*}
Since, $\gcd(q-1,(q+1)/\delta_i)=1$ for $1 \leq i \leq 3$, we apply the CRT to the above system independently for each $i \in \{1,2,3\}$, yielding
\begin{equation}\label{c7}
    d \equiv -q^2+2^{m-1}\alpha r^{-1}(q+1)^2 \pmod{(q^2-1)/\delta_i}, \text{ for } i \in \{1,2,3\}.
\end{equation}
We further consider the following two subcases depending upon $\gcd(\delta_1,\delta_2,\delta_3)$.

\textbf{Subcase 2(a).} Suppose that $\gcd(\delta_1,\delta_2,\delta_3) = 1$. From Equation~\eqref{c7}, we get
$d \equiv -q^2+2^{m-1}\alpha r^{-1}(q+1)^2 \pmod{\ell}$, $\ell=\lcm((q^2-1)/\delta_1, (q^2-1)/\delta_2, (q^2-1)/\delta_3)$. Through some elementary calculations, we obtain $\ell=q^2-1$. Thus, we can express ${d \equiv -q^2+2^{m-1}\alpha r^{-1}(q+1)^2 \pmod{q^2-1}}$.

\textbf{Subcase 2(b).} Let $\gcd(\delta_1,\delta_2, \delta_3) \neq 1$. From Lemma~\ref{lc1}, it follows that $F(X)$ is not a permutation polynomial over $\F_{q^2}$, a contradiction. 

Summarizing the above discussion, we get $d \equiv -q^2+2^{m-1}\alpha r^{-1}(q+1)^2 \pmod{q^2-1}$ or more precisely, $d=\alpha(q+1)r^{-1}-1 $. Moreover, note that $\gcd(d,q^2-1)=1$. This completes the proof for $q$ as an even prime power. 

Next, we consider the scenario where $q$ is an odd prime power.  Let $n$ be a positive integer such that $2^n \mid (q+1)$ and $2^{n+1} \nmid (q+1)$. Given that $\gcd(r,q-1)=1$, there exists a positive integer $s$ such that $rs\equiv 1\pmod {2^n(q-1)}$. We deduce the following equations from System~\eqref{c5}
\begin{align}{\label{c1s1}}
        \begin{cases}
        (d+1)\eta&  \equiv 0 \pmod {q+1}, ~\mbox{where } \eta \in \{\alpha, \beta, r\}, \\
        d & \equiv \alpha (q+1)s-1 \pmod {2^n(q-1)}.
        \end{cases}
\end{align}

Now, we analyze the following two possible cases.
			  
\textbf{Case 1.} Let $\gcd(\alpha,q+1)=1$, or $\gcd(\beta,q+1)=1$, or $\gcd(r,q+1)=1$. WLOG assume $\gcd(\alpha,q+1)=1$. Then, from System~\eqref{c1s1}, we have
\begin{equation*}
\begin{cases}
d & \equiv -1 \pmod {(q+1)/2}, \\
d & \equiv \alpha (q+1)s-1 \pmod {(q-1)/2}.
\end{cases}
\end{equation*}
Since $\gcd((q+1)/2,(q-1)/2)=1$, we apply CRT to the above system of equations and obtain that $d\equiv \alpha (q+1)s-1 \pmod {(q^2-1)/4}$.
		
\textbf{Case 2.}  Suppose that $ \lambda_{1}=\gcd(\alpha,q+1) \ne 1 $, $ \lambda_{2}=\gcd(\beta,q+1) \ne 1$ and $ \lambda_{3}=\gcd(r,q+1) \ne 1$. 
Now, we need to examine the following two subcases

\textbf{Subcase 2(a).} Assume that  $\gcd(\lambda_{1},\lambda_{2},\lambda_{3})=1$. Then, one can write $\lambda_{i}=2^k\delta_{i}$ with $k\in \{0,1\}$, depending on whether $\lambda$ is odd or even, respectively. This leads to the following system
		\begin{equation*}
		\begin{cases}
		d & \equiv -1 \pmod {(q+1)/2 \delta_{i}}
        \text{ for } i \in \{1,2,3\},  \\
		d & \equiv \alpha (q+1)s-1 \pmod {(q-1)/2}.
	\end{cases}
\end{equation*}
By applying CRT to the system of equations provided above, we can derive 
$ d\equiv \alpha{(q+1)}s-1 \pmod{(q^2-1)/4\delta_{i}}, \text{ where } i \in \{1,2,3\}.$ This further results in $ d\equiv \alpha(q+1)s-1 \pmod{\ell},$ where $\ell$ denotes $\lcm((q^2-1)/4\delta_{1},(q^2-1)/4\delta_{2},(q^2-1)/4\delta_{3})$.
In the similar manner as the case of even characteristic, one can compute $\ell$ and show that
\begin{equation}\label{c2imp}
d\equiv \alpha(q+1)s-1 \pmod{(q^2-1)/4}.
\end{equation}

\textbf{Subcase 2(b).} Let $\gcd(\lambda_{1},\lambda_{2}, \lambda_{3}) \neq 1$. Then, from Lemma~\ref{lc1}, we observe that $F(X)$ is not a permutation polynomial over $\F_{q^2}$, a contradiction.
	
Notice that $d\equiv \alpha(q+1)s-1 \pmod{(q^2-1)/4}$ holds in Case 1 and Case 2. From last equation of the System~\eqref{c1s1} and Equation~\eqref{c2imp}, we have the following system of equations
\begin{equation*}
\begin{cases}
d & \equiv \alpha (q+1)s-1 \pmod {(q^2-1)/4}, \\
d & \equiv \alpha (q+1)s-1 \pmod {2^n(q-1)}.
\end{cases}
\end{equation*}
One can easily see that $\lcm((q^2-1)/4,2^n(q-1))=q^2-1.$ Therefore, we have
\begin{equation*}
d\equiv \alpha(q+1)s-1 \pmod{q^2-1}.
\end{equation*} 
This completes the proof.
\end{proof}

\begin{rmk}
After independently proving Conjecture~\cite[Conjecture 1]{Harsh} in Theorem~\ref{conjucture}, we discovered that it had already been established as a special case of a result by Yadav, Singh, and Gupta~\cite[Theorem 3.3]{Yadav}. However, our proof differs significantly from theirs, particularly in that we explicitly compute the exponent $d$ rather than assuming it, as was done in their work. Thus, we believe it is still worthwhile to include this proof in our paper.
\end{rmk}

\section{Conclusion}\label{S7}
In this paper, we construct three new classes of permutation trinomials for $(\alpha,\beta, r) \in \{ (7,5,7), (8,6,9), (10,4,11) \}$ over the finite field $\F_{2^{2m}}$.  Moreover,  we analyze the QM equivalence among the newly determined permutation trinomials and the previously known ones. Finally, using some algebraic geometric techniques, we show that a specific class of trinomials does not permute $\F_{2^{2m}}$.

\end{document}